\documentclass[12pt,verbatim]{amsart}
\usepackage{amsfonts}
\usepackage{ifthen}
\usepackage{amsthm}
\usepackage{amsmath}
\usepackage{amssymb}
\usepackage{graphicx}
\usepackage{subfigure}
\usepackage{amscd,amssymb,amsthm}
\usepackage{color,xcolor}

\newcounter{minutes}\setcounter{minutes}{\time}
\divide\time by 60
\newcounter{hours}\setcounter{hours}{\time}
\multiply\time by 60 \addtocounter{minutes}{-\time}

\setlength{\paperwidth}{210mm} \setlength{\paperheight}{297mm}
\setlength{\oddsidemargin}{0mm} \setlength{\evensidemargin}{0mm}
\setlength{\topmargin}{-20mm} \setlength{\headheight}{10mm}
\setlength{\headsep}{13mm} \setlength{\textwidth}{160mm}
\setlength{\textheight}{240mm} \setlength{\footskip}{15mm}
\setlength{\marginparwidth}{0mm} \setlength{\marginparsep}{0mm}

\keywords{Lipschitz constant, M\"obius transformation} \subjclass[2010]{51M10(30C20)}

\newtheorem{theorem}[equation]{Theorem}

\newtheorem{lemma}[equation]{Lemma}

\newtheorem{remark}[equation]{Remark}

\newtheorem{nonsec}[equation]{}
\newtheorem{conjecture}[equation]{Conjecture}

\newcommand{\beq}{\begin{equation}}
\newcommand{\eeq}{\end{equation}}

\newcommand{\R}{\mathbb{R}^2}

\newcommand{\UH}{\mathbb{H}^2}
\newcommand{\BB}{\mathbb{B}^2}

\newcommand{\Hn}{ {\mathbb{H}^n} }
\newcommand{\Bn}{ {\mathbb{B}^n} }
\newcommand{\Rn}{ {\mathbb{R}^n} }


\newcommand{\arth}{\,\textnormal{arth}}


\numberwithin{equation}{section}

\begin{document}








\def\thefootnote{}
\footnotetext{ \texttt{\tiny File:~\jobname .tex,
          printed: \number\year-\number\month-\number\day,
          \thehours.\ifnum\theminutes<10{0}\fi\theminutes}
} \makeatletter\def\thefootnote{\@arabic\c@footnote}\makeatother

\title{Sharp Lipschitz constants for the distance ratio metric}

\author{Slavko Simi\'c}
\author{Matti Vuorinen}
\author{Gendi Wang}

\address{Mathematical Institute SANU, Kneza Mihaila 36, 11000 Belgrade, Serbia}
\email{ssimic@turing.mi.sanu.ac.rs}

\address{Department of Mathematics and Statistics, University of Turku, Turku 20014,
Finland}\email{vuorinen@utu.fi}

\address{Department of Mathematics and Statistics, University of Turku, Turku 20014,
Finland}
\email{genwan@utu.fi}


\maketitle

\begin{abstract}
We study expansion/contraction properties of some common classes of
mappings of the Euclidean space ${\mathbb R}^n, n\ge 2\,,$
with respect to the distance ratio metric. The first main case is  the behavior of M\"obius transformations of the unit ball in ${\mathbb R}^n$ onto itself.
In the second main case we study the 
polynomials of the unit disk onto a subdomain of the complex plane. In both cases sharp Lipschitz constants are
obtained.
\end{abstract}


\section{Introduction}

Conformal invariants and conformally invariant metrics have been some of the key notions of geometric function theory and of quasiconformal
mapping theory for several decades.
One of the modern trends is to extend this theory to Euclidean spaces
of higher dimension or to more general metric spaces and new methods have been invented. It has turned out that we cannot any more expect
the same invariance properties as in the classical cases, but still
some type of "nearinvariance" or "quasi-invariance" is a desirable
feature. The quasihyperbolic metric of a domain $G \subset {\mathbb R}^n, n\ge 2\,$ is one of these new notions and it is known to be quasi-invariant in the sense described below and there are many reasons why one may regard it as a spatial version of the hyperbolic
metric. This notion is also applicable in the wider context of metric spaces. For example, J. V\"ais\"al\"a's theory of quasiconformal maps in infinite dimensional Banach spaces is
entirely based on the quasihyperbolic metric \cite{va1}.
 Another crucial notion, which is the object of the present
study, is the distance ratio metric. It is often applied to study
quasihyperbolic geometry. Very little is known about the geometric
properties of balls in these two spaces: only recently it was proved
for instance that in some special cases
 for small radii the balls are convex \cite{k1,k2,mv,va2,rt}
no matter where the center is. This convexity property is expected
to hold in general, but it has not been proved yet.

We next define these two metrics.

{\bf Distance ratio metric.}
For a proper subset $G$ of $\Rn$ and for all
$x,y\in G$, the  distance-ratio
metric $j_G$ or $j$-metric is defined as
\begin{eqnarray*}
 j_G(x,y)=\log \left( 1+\frac{|x-y|}{\min \{d(x,\partial G),d(y, \partial G) \} } \right)\,,
\end{eqnarray*}
where $d(x,\partial G)$ denotes the Euclidean distance
from the point $x$ to the boundary  $\partial G$ of the domain $G\,.$
The distance ratio metric was introduced by F.W. Gehring and B.P. Palka
\cite{gp} and in the above simplified form by M. Vuorinen \cite{v0}. Both definitions are
frequently used in the study of hyperbolic type metrics \cite{himps},
geometric theory of functions \cite{v}, and quasiconformality in Banach spaces \cite{va1}.

Let $p\in G$, then for all $x,y\in G\setminus\{p\}$
\begin{eqnarray*}
 j_{G\setminus\{p\}}(x,y)=\log \left( 1+\frac{|x-y|}{\min \{d(x,\partial G),d(y, \partial G), |x-p|, |y-p|\} } \right)\,.
\end{eqnarray*}
This formula shows that the $j$-metric highly depends on the boundary of the domain.

\medskip

{\bf Quasihyperbolic metric.}
Let $G$ be a proper subdomain of ${\mathbb R}^n\,$. For all $x,\,y\in G$, the quasihyperbolic metric $k_G$ is defined as
$$k_G(x,y)=\inf_{\gamma}\int_{\gamma}\frac{1}{d(z,\partial G)}|dz|,$$
where the infimum is taken over all rectifiable arcs $\gamma$ joining $x$ to $y$ in $G$.

\medskip
It should be noted that in the case $G=\Bn$ or $G=\Hn$ the three metrics $\rho_{G}$, $k_G$, and $j_G$ can be compared.
For instance, below we shall apply the following basic inequalities \cite[Lemma 2.41]{v} and \cite[Lemma 7.56]{avv}:
\begin{eqnarray}
 \label{jrho1}
 \frac{1}{2} \rho_{\mathbb{B}^n}(x,y) &\le & j_{\mathbb{B}^n}(x,y) \le   k_{\mathbb{B}^n}(x,y) \le \rho_{\mathbb{B}^n} (x,y)   \,, \forall  \,\, x,y \in \mathbb{B}^n \,,\\
 \label{jrho2}
\frac{1}{2}\rho_{\mathbb{H}^n}(x,y) &\le & j_{\mathbb{H}^n}(x,y) \le   k_{\mathbb{H}^n}(x,y) \equiv \rho_{\mathbb{H}^n} (x,y)   \,, \forall  \,\, x,y \in \mathbb{H}^n \, .
\end{eqnarray}
\medskip

The hyperbolic metric in the unit ball or half space is M\"obius invariant. However, neither
the quasihyperbolic metric nor the distance ratio metric
is invariant under M\"obius transformations. Therefore, it is natural to ask what the Lipschitz constants are for these metrics under conformal mappings or M\"obius transformations in higher dimension.
F.\,W. Gehring, B.\,G. Osgood, and B.\,P. Palka proved that these metrics are not
changed by more than a factor $2$ under M\"obius transformations, see \cite[proof of Theorem 4]{go} and \cite[Corollary 2.5]{gp}:

\begin{theorem} \label{gplem}
If $D$ and $D'$ are proper subdomains of $\Rn$ and if $f$ is a
M\"obius transformation of $D$ onto $D'$, then for all $x,y\in D$
$$\frac12 m_{D}(x,y)\leq m_{D'}(f(x),f(y))\leq 2m_D(x,y),$$
where $m\in\{j,k\}$.
\end{theorem}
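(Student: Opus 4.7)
I would reduce the theorem to a pointwise distortion lemma for M\"obius maps: for every M\"obius $f:D\to D'$ and every $z\in D$,
$$\tfrac{1}{2}\,|f'(z)|\,d(z,\partial D)\ \le\ d(f(z),\partial D')\ \le\ 2\,|f'(z)|\,d(z,\partial D),$$
where $|f'(z)|$ is the conformal dilatation. I would prove this first for M\"obius self-maps of $\mathbb{B}^n$: the identity $|f'(z)|=(1-|f(z)|^{2})/(1-|z|^{2})$ gives $d(f(z),\partial \mathbb{B}^n)/[|f'(z)|\,d(z,\partial \mathbb{B}^n)]=(1+|z|)/(1+|f(z)|)\in(1/2,2)$. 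The general case then follows by applying this ball-to-ball version on the maximal inscribed ball $B=B(z,d(z,\partial D))\subset D$: since M\"obius maps send spheres to spheres, $f(B)$ is a round ball (or half-space) inside $D'$, and because $\partial D'$ lies outside $f(B)$ we obtain the lower bound $d(f(z),\partial D')\ge d(f(z),\partial f(B))\ge\tfrac{1}{2}\,|f'(z)|\,d(z,\partial D)$; the upper bound then follows from the lower bound applied to $f^{-1}:D'\to D$ together with $|(f^{-1})'(f(z))|=1/|f'(z)|$.

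Given the lemma, the quasihyperbolic bound is a one-line change of variables: for any rectifiable arc $\gamma$ from $x$ to $y$ in $D$,
$$\int_{f\circ\gamma}\frac{|dw|}{d(w,\partial D')}\ =\ \int_{\gamma}\frac{|f'(z)|\,|dz|}{d(f(z),\partial D')},$$
and the lemma sandwiches the right-hand integrand between $(2\,d(z,\partial D))^{-1}$ and $2\,d(z,\partial D)^{-1}$. Passing to the infimum yields $\tfrac{1}{2}\,k_D(x,y)\le k_{D'}(f(x),f(y))\le 2\,k_D(x,y)$.

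The $j$-case is more delicate. The natural target is the multiplicative estimate
$$1+\frac{|f(x)-f(y)|}{\min\{d(f(x),\partial D'),d(f(y),\partial D')\}}\ \le\ \left(1+\frac{|x-y|}{\min\{d(x,\partial D),d(y,\partial D)\}}\right)^{2},$$
whose logarithm is exactly $j_{D'}(f(x),f(y))\le 2\,j_D(x,y)$. I would substitute the classical M\"obius distance identity $|f(x)-f(y)|^{2}=|f'(x)||f'(y)||x-y|^{2}$ for the numerator and the pointwise lemma for each $d(f(\cdot),\partial D')$ in the denominator, reducing the problem to an elementary inequality that a case split on which of $d(f(x),\partial D')$, $d(f(y),\partial D')$ realises the minimum closes. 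The squared form is essential: the cross-term it produces absorbs the asymmetry between $|f'(x)|$ and $|f'(y)|$ that the pointwise lemma cannot control on its own. The reverse inequality follows by applying the same argument to $f^{-1}$.

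The main obstacle is the pointwise lemma with its sharp constant $2$ — in particular the passage from the ball-to-ball model case to arbitrary domains via the inscribed-ball trick — and the algebraic manipulation closing the squared comparison in the $j$-case.
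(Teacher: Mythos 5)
The paper does not prove this theorem; it quotes it from Gehring--Osgood and Gehring--Palka, so your attempt has to stand on its own. Your quasihyperbolic part is correct and is essentially the standard Gehring--Osgood argument: the pointwise lemma $\tfrac12|f'(z)|\,d(z,\partial D)\le d(f(z),\partial D')\le 2|f'(z)|\,d(z,\partial D)$ is right (the inscribed-ball reduction works, provided you also record the ball-to-half-space case, where the corresponding ratio is $(1+|z|)/2\in(1/2,1]$), and the change of variables in the line integral finishes it.

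The $j$-case, however, has a genuine gap. Put $r=|x-y|/\min\{d(x,\partial D),d(y,\partial D)\}$ and suppose $d(f(x),\partial D')$ realises the minimum in $D'$. Your two ingredients give
$$\frac{|f(x)-f(y)|}{d(f(x),\partial D')}\ \le\ \frac{2|x-y|}{d(x,\partial D)}\sqrt{\frac{|f'(y)|}{|f'(x)|}}\ =\ \frac{2|x-y|}{d(x,\partial D)}\cdot\frac{|x-a|}{|y-a|},\qquad a=f^{-1}(\infty),$$
and the only available control on the last factor is $|x-a|/|y-a|\le 1+|x-y|/|y-a|\le 1+r$ (using $a\notin D$). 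This yields $2r+2r^2$, whereas the target is $(1+r)^2-1=2r+r^2$; and $\sup_{r>0}\log(1+2r+2r^2)/\log(1+r)>2$ (at $r=1$ it equals $\log 5/\log 2\approx 2.32$), so the estimate you would obtain is strictly weaker than the theorem. The case split does not rescue this, because both quantities $|f(x)-f(y)|/d(f(x),\partial D')$ and $|f(x)-f(y)|/d(f(y),\partial D')$ must be bounded and each faces the same obstruction: the factor $2$ lost in the pointwise lemma and the factor $1+r$ from the derivative ratio multiply, and nothing in your sketch shows they cannot be simultaneously near-extremal. The standard repair is to drop the pointwise lemma in the $j$-case and use M\"obius invariance of the absolute ratio directly: choose $w\in\partial D'$ nearest to $f(x)$, set $c=f^{-1}(w)\in\partial D$ and $a=f^{-1}(\infty)\notin D$ (if $a=\infty$, $f$ is a similarity and the claim is trivial); then $\frac{|f(x)-f(y)|}{|f(x)-w|}=\frac{|x-y|\,|c-a|}{|x-c|\,|y-a|}$, and the triangle inequality $|c-a|\le|c-x|+|x-y|+|y-a|$ together with $|x-c|\ge d(x,\partial D)$ and $|y-a|\ge d(y,\partial D)$ gives exactly $2r+r^2$, hence $j_{D'}(f(x),f(y))\le 2j_D(x,y)$; the lower bound follows by applying this to $f^{-1}$. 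That is the Gehring--Palka proof the paper cites.
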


R. Kl\'en, M. Vuorinen, and X.-H. Zhang studied the sharpness of the constant $2$ in Theorem \ref{gplem}. They got the sharp bilipschitz constant $1+|a|$ for the quasihyperbolic metric under M\"obius self-mappings of the unit ball \cite[Theorem 1.4]{kvz}, and proposed a conjecture for the distance ratio metric.
\begin{conjecture} \cite[Conjecture 2.3]{kvz}\label{con}
Let $a\in\Bn$ and $h : \Bn \to \Bn=h\Bn$ be a M\"obius transformation with $h(a) =0\,.$
Then
$$\sup_{x,y\in\Bn\atop{x\neq y}}\frac{j_{\Bn}\left(h(x),h(y)\right)}{j_{\Bn}(x,y)}=1+|a|.$$
\end{conjecture}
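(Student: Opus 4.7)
The plan is first to reduce to the standard M\"obius involution $T_a$ of $\Bn$ satisfying $T_a(a)=0$ and $T_a(0)=a$: any $h$ with $h(a)=0$ factors as $h = U \circ T_a$ with $U$ orthogonal, and orthogonal maps are exact $j_{\Bn}$-isometries. For the lower bound (sharpness), I would take $x = 0$ and $y = \varepsilon v$ for a unit vector $v$ and let $\varepsilon \to 0^+$. Using the classical identities
\[
|T_a(x) - T_a(y)| = \frac{(1-|a|^2)|x-y|}{[x,a]\,[y,a]}, \qquad 1 - |T_a(x)|^2 = \frac{(1-|a|^2)(1-|x|^2)}{[x,a]^2},
\]
with $[x,a]^2 = |x|^2|a|^2 - 2\langle x,a \rangle + 1$, a direct Taylor expansion gives $j_{\Bn}(T_a(0), T_a(\varepsilon v)) \sim (1+|a|)\varepsilon$ against $j_{\Bn}(0,\varepsilon v) \sim \varepsilon$, so the ratio tends to $1+|a|$.

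For the upper bound $j_{\Bn}(T_ax,T_ay) \le (1+|a|)\, j_{\Bn}(x,y)$, my strategy is a two-step dimension reduction. First, since $T_a$ commutes with every orthogonal map fixing $a$, and $j_{\Bn}$ is orthogonally invariant, a simultaneous rotation of $(x,y)$ about the $a$-axis leaves the ratio unchanged; this lets me place $x,y$ in a $2$-plane through $a$, reducing to $n=2$. Second, with $|x|,|y|,|x-y|$ held fixed, I would argue that the ratio is maximized when $x,y,0,a$ are collinear, exploiting the monotone dependence of $[x,a]^2$ on the signed angle $\langle x,a\rangle$. \emph{This reduction to the diameter is the main technical obstacle}, since $1-|T_ax|$ mixes angular and radial data in a non-trivial way and will likely require a direct variational computation on the one-parameter family of admissible angular configurations.

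Once on the diameter, with $\varphi_r(t) = (r-t)/(1-rt)$ and $r = |a|$, the algebraic identity
\[
(1+x)(1-ry) + (y-x)(1+r) = (1+y)(1-rx)
\]
collapses the desired $j(\varphi_r x, \varphi_r y) \le (1+r) j(x,y)$ to one of the scalar inequalities $g(y) \le g(x)$ or $h(x) \le h(y)$, depending on which endpoint realizes the minimum in the $j$-denominator, where
\[
g(t) = \log(1+t) + (1+r)\log(1-t) - \log(1-rt), \qquad h(t) = \log(1-rt) + r\log(1+t).
\]
A direct differentiation yields
\[
g'(t) = \frac{(1+r)\, t\, (rt + r - 2)}{(1 - t^2)(1 - rt)}, \qquad h'(t) = \frac{-r(1+r)\, t}{(1+t)(1-rt)},
\]
both of sign opposite to $t$ on $(-1,1)$; hence $g$ attains its unique maximum and $h$ its unique minimum at $t = 0$, each with value $0$. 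A short sign/position case analysis on $(x,y)$, together with the auxiliary bound $g(t) \le g(-t)$ for $t \in [0,1)$ (which follows from $\arth(rt) \le r\,\arth(t)$ via a derivative comparison), then dispatches every sub-case and completes the one-dimensional upper bound.
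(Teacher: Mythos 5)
Your reduction to $T_a$ modulo an orthogonal factor and your sharpness computation (infinitesimal points at the origin, which $T_a$ sends to $a$, giving ratio $\to 1+|a|$) are sound and agree in spirit with the paper, which cites the test pair $x=ta/|a|=-y$, $t\to 0^+$. The upper bound, however, has a genuine gap. First, the symmetry you invoke only reduces the problem to the linear span of $\{x,y,a\}$, i.e.\ to $n\le 3$: simultaneously rotating $(x,y)$ about the $a$-axis cannot place a generic triple $x,y,a$ into a common $2$-plane, since the ratio depends on the full Gram data $|x|^2,|y|^2,\langle x,y\rangle,\langle x,a\rangle,\langle y,a\rangle$ (through $|x-y|$, $|x-a^*|$, $|y-a^*|$), and a rank-$3$ Gram matrix is not realizable in a plane. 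Second, and decisively, the passage to the collinear case is precisely the step you yourself flag as ``the main technical obstacle'' and leave unproved; note moreover that fixing $|x|,|y|,|x-y|$ already fixes the angle between $x$ and $y$, so the collinear configuration of $x,y,0$ is generally not in the family over which you propose to optimize --- the actual extremal problem is over the two remaining parameters $\langle x,a\rangle,\langle y,a\rangle$, and no argument is given. (In the one-dimensional endgame there is also a slip: your $h'$ has sign opposite to $t$, so $h$ has a \emph{maximum}, not a minimum, at $t=0$; whether this is harmless depends on the orientation of the sub-case, which is not spelled out.)

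The paper proves the upper bound without any dimension or collinearity reduction. Writing $f(x)=(\sigma_a x)A$ with $\sigma_a$ the inversion in a sphere orthogonal to $S^{n-1}$ sending $a$ to $0$, it uses the identities $|a|^2|b-a^*|^2-|b-a|^2=(1-|a|^2)(1-|b|^2)$ and $\frac{|b-a|}{|a||b-a^*|}\le\frac{|b|+|a|}{1+|a||b|}$ to bound
$j_{\mathbb{B}^n}(\sigma_a x,\sigma_a y)$ above by
$\log\bigl(1+\frac{|x-y|}{1-r}\bigl(1+\frac{|a||x-y|}{1-|a|r}\bigr)\bigl(1+\frac{|a|(1-r)}{1+|a|r}\bigr)\bigr)$ with $r=\max\{|x|,|y|\}$, and then three elementary monotonicity lemmas (via the monotone form of l'H\^opital's rule) collapse the quotient to $1+\arth(|a|r)/\arth r\le 1+|a|$, valid in all dimensions. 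To salvage your route you would have to carry out the deferred variational argument establishing extremality of the collinear configuration; otherwise direct $n$-dimensional estimates of the above kind are needed.
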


The following Theorem \ref{mth1} shows that the  solution to Conjecture \ref{con} is in the affirmative and thus the Gehring--Osgood--Palka Theorem \ref{gplem} admits a refinement for M\"obius transformations of the unit ball onto itself.
\begin{theorem} \label{mth1}
Let $a\in {\mathbb B}^n$ and $f: {\mathbb B}^n \to {\mathbb B}^n= f{\mathbb B}^n$  be a M\"obius transformation with
$f(a)=0$. Then for all $x,y \in {\mathbb B}^n$
\begin{eqnarray*}
\frac{1}{1+|a|}j_{{\mathbb B}^n}(x,y)\leq j_{{\mathbb B}^n}(f(x),f(y)) \leq (1+ |a|) j_{{\mathbb B}^n}(x,y),
\end{eqnarray*}
and the constants $\frac{1}{1+|a|}$ and $1+|a|$ are both the best possible.
\end{theorem}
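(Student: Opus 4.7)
The plan is to prove the upper bound $j_{\mathbb{B}^n}(f(x), f(y)) \leq (1+|a|)\, j_{\mathbb{B}^n}(x,y)$; the lower bound will then come for free. Since orthogonal transformations preserve the $j$-metric, and any M\"obius self-map of $\mathbb{B}^n$ with $f(a)=0$ factors as $f = U \circ T_a$ with $U$ orthogonal and $T_a$ the canonical M\"obius involution satisfying $T_a(a)=0$ and $T_a(0)=a$, we may reduce to $f = T_a$. Because $T_a$ is an involution, substituting $(x,y) \leftrightarrow (T_a(x), T_a(y))$ in the upper bound immediately gives the lower bound $j_{\mathbb{B}^n}(f(x),f(y)) \geq (1+|a|)^{-1}\, j_{\mathbb{B}^n}(x,y)$.

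The next step replaces the $j$-inequality by a simpler one for $s(x,y) := |x-y|/\min\{1-|x|, 1-|y|\}$, the expression inside the logarithm. Since $\log(1+Ct) \leq C\log(1+t)$ for every $C \geq 1$ and $t \geq 0$, it suffices to show $s(T_a(x),T_a(y)) \leq (1+|a|)\, s(x,y)$. The standard M\"obius identities
\[
|T_a(x)-T_a(y)| = \frac{(1-|a|^2)|x-y|}{[x,a]\,[y,a]}, \qquad 1-|T_a(z)|^2 = \frac{(1-|a|^2)(1-|z|^2)}{[z,a]^2},
\]
with $[z,a] := \sqrt{1 - 2 z\cdot a + |z|^2|a|^2}$, give $[z,a](1 \pm |T_a(z)|) = [z,a] \pm |a-z|$ and the factorization $([z,a]+|a-z|)([z,a]-|a-z|)=(1-|a|^2)(1-|z|^2)$. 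Assuming WLOG $|x|\geq|y|$ and splitting into Case A ($|T_a(x)| \geq |T_a(y)|$) and Case B (the reverse), direct substitution reduces the $s$-inequality to
\[
[x,a]+|a-x| \leq (1+|a|)(1+|x|)\,[y,a] \quad \text{(Case A)},
\]
respectively
\[
(1-|x|)([y,a]+|a-y|) \leq (1+|a|)(1-|y|^2)\,[x,a] \quad \text{(Case B)}.
\]

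The universal estimate $[z,a]+|a-z| \leq (1+|a|)(1+|z|)$, immediate from $[z,a] \leq 1+|a|\,|z|$ and $|a-z| \leq |a|+|z|$, resolves the easy subcase of each inequality (Case A when $[y,a]\geq 1$, and similarly for Case B). The main obstacle is the complementary regime where these factors drop below $1$; here one must use the case hypothesis, which via $|T_a(z)|^2 = |a-z|^2/[z,a]^2$ takes the monotonicity form $(1-|x|^2)[y,a]^2 \geq (1-|y|^2)[x,a]^2$ in Case B (forcing $[y,a] \geq [x,a]$), with the reverse in Case A. Combining this extra structure with the factorization above should close each bound. Since all relevant quantities depend only on $|x|, |y|, |a|, x\cdot a, y\cdot a, x\cdot y$, rotational invariance lets one work in at most three dimensions, and the final step is a polynomial inequality in these few parameters.

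For sharpness, take $x=0$ and let $y\to 0$. The derivative of $T_a$ at $0$ is a conformal linear map with scalar factor $|T_a'(0)|=1-|a|^2$, while $\min\{1-|T_a(0)|,1-|T_a(y)|\}\to 1-|a|$, whence
\[
\frac{s(T_a(0),T_a(y))}{s(0,y)} \longrightarrow \frac{1-|a|^2}{1-|a|} = 1+|a|.
\]
Using $\log(1+t) \sim t$ as $t \to 0$, this gives $j_{\mathbb{B}^n}(f(0),f(y))/j_{\mathbb{B}^n}(0,y) \to 1+|a|$, so the constant $1+|a|$ cannot be improved. Sharpness of $(1+|a|)^{-1}$ follows analogously by letting $y\to x=a$, using $|T_a'(a)|=(1-|a|^2)^{-1}$.
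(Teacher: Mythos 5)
Your overall skeleton --- reducing to the canonical involution $\sigma_a$ modulo an orthogonal factor, deducing the lower bound from the upper bound via the involution property, and extracting sharpness from the local behaviour of $\sigma_a$ near the centre --- is exactly the paper's strategy. However, the central analytic step rests on a false inequality, so the proof cannot be completed along the proposed route. You claim that, by Bernoulli's inequality $\log(1+Ct)\le C\log(1+t)$, it suffices to prove the de-logarithmized bound $s(T_a(x),T_a(y))\le(1+|a|)\,s(x,y)$ for $s(x,y)=|x-y|/\min\{1-|x|,1-|y|\}$. This stronger statement is not true. Take $a=ce_1$ with $c\in(0,1)$ and $x=(1-\varepsilon)e_1$, $y=-(1-\varepsilon)e_1$. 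On the diameter through $a$ the map acts by $u\mapsto(c-u)/(1-cu)$, and a direct computation gives
$$\frac{s(\sigma_a(x),\sigma_a(y))}{s(x,y)}=\frac{1+c}{1-c+c\varepsilon}\longrightarrow\frac{1+c}{1-c}\qquad(\varepsilon\to0^+),$$
which strictly exceeds $1+c$ for every $c>0$. (This does not contradict the theorem: for these points $j_{\mathbb{B}^n}(x,y)\to\infty$, so the multiplicative error $\frac{1+c}{1-c}$ becomes an additive term $\log\frac{1+c}{1-c}$ inside the logarithm and the $j$-ratio in fact tends to $1$.) The logarithm is therefore essential and cannot be stripped off at the outset; the paper keeps it throughout, splitting the expression inside the logarithm multiplicatively and using the monotone form of l'H\^opital's rule (Lemma \ref{lhr} via Lemma \ref{le2.3}) to show that ratios such as $\log\bigl(1+\frac{2|a|r\theta}{1-|a|r}\bigr)/\log\bigl(1+\frac{2r\theta}{1-r}\bigr)$ are monotone in $\theta$, reducing finally to ${\arth}(|a|r)/{\arth}\,r\le|a|$.

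A secondary point: even granting your reduction, the two displayed case inequalities are only asserted to ``close'' after ``a polynomial inequality in these few parameters,'' which is precisely the hard part and is left undone, so the plan is incomplete independently of the counterexample above. Your sharpness arguments are fine and essentially coincide with the paper's, which takes $x=ta/|a|=-y$ and lets $t\to0^+$.
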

 We also study how the $j$-metric behaves under
 polynomials, or more generally,
 analytic functions of the unit disk onto a subdomain of the complex plane. In this direction, our main results are the following two theorems.
\begin{theorem}\label{sth5}
Let $p\in \mathbb N$ and $\{a_l\}$ be a sequence of complex numbers with $\sum_{l=1}^{p}|a_l|\le
1$. Let $f: \BB\rightarrow\BB$ with $f(\BB\setminus \{0\})\subset \BB\setminus \{0\}$ and $f(z)=\sum_{l=1}^{p}a_l z^l$ and $f(0)=0.$ Then for all $x,\,y\in \BB\setminus \{0\}$
\begin{eqnarray*}
j_{\BB\setminus \{0\}}(f(x),f(y))\le p j_{\BB\setminus \{0\}}(x,y),
\end{eqnarray*}
and the constant $p$ is sharp.
\end{theorem}

\begin{theorem} \label{mth3}
Let $\{a_l\}$ be a sequence of complex numbers and $\sum_{l=0}^{\infty}|a_l|\le 1$. Let $f: {\mathbb B}^2 \to {\mathbb B}^2$ be a non-constant analytic function defined by $f(z)=\sum_{l=0}^{\infty}a_l z^l $. Then for all
 $x,\,y\in {\mathbb B}^2$
\begin{eqnarray*}
j_{{\mathbb B}^2}(f(x),f(y))\le j_{{\mathbb B}^2}(x,y),
\end{eqnarray*}
and this inequality is sharp.
\end{theorem}

\section{Lipschitz Constants under some special mappings}
We recall here some basic facts about M\"obius transformations from \cite{b, v}.

\medskip
{\bf M\"obius transformations.}
The group of M\"obius transformations in $\overline{\Rn}$ is generated by transformations of two types:

(1) reflections in the hyperplane $P(a,t)=\{x\in \Rn: x\cdot a=t\}\cup\{\infty\}$
$$f_1(x)=x-2(x\cdot a-t)\frac{a}{|a|^2},\,\, f_1(\infty)=\infty,$$
where $a\in\Rn\setminus\{0\}$ and $t\in\R$;

(2) inversions (reflections) in the sphere $S^{n-1}(a,r)=\{x\in \Rn: |x-a|=r\}$
$$f_2(x)=a+\frac{r^2(x-a)}{|x-a|^2},\,\,f_2(a)=\infty, f_2(\infty)=a,$$
where $a\in\Rn$ and $r>0$. If $G\subset\overline{\Rn}$ we denote by $\mathcal{GM}(G)$ the group of all M\"obius transformations which map $G$ onto itself.

We denote $a^*=\frac{a}{|a|^2}$ for $a\in\Rn\setminus\{0\}$, and $0^*=\infty$, $\infty^*=0$. For fixed $a\in\Bn\setminus\{0\}$, let
\begin{eqnarray}\label{iv}
\sigma_a(z)=a^*+r^2(z-a^*)^*,\,\,r^2=|a|^{-2}-1
\end{eqnarray}
be an inversion in the sphere $S^{n-1}(a^*,r)$ orthogonal to $S^{n-1}$. Then $\sigma_a(a)=0$,  $\sigma_a(a^*)=\infty$ and by \cite[3.1.5]{b},\cite[(1.5)]{v}
\begin{eqnarray}\label{ivd}
|\sigma_a(x)-\sigma_a(y)|=\frac{r^2|x-y|}{|x-a^*||y-a^*|}.
\end{eqnarray}

{\bf Lipschitz mappings.}
 Let $(X,d_X)$ and $(Y,d_Y)$ be metric spaces. Let $f: X\rightarrow Y$ be continuous and let $L\geq1$. We say that $f$ is $L$-lipschitz if
\begin{eqnarray*}
d_Y(f(x),f(y))\leq L d_X(x,y),\,\,{\rm for}\,\, x,\,y\in X,
\end{eqnarray*}
and $L$-bilipschitz if $f$ is
a homeomorphism and
\begin{eqnarray*}
d_X(x,y)/L\leq d_Y(f(x),f(y))\leq L d_X(x,y),\,\,{\rm for}\, \,x,\,y\in X.
\end{eqnarray*}
A $1$-bilipschitz mapping is called an isometry.

\medskip

The next lemma, so-called {\em
monotone form of l'H${\rm \hat{o}}$pital's rule}, has found recently numerous applications in proving inequalities. See the extensive bibliography of \cite{avz}.

\begin{lemma} \label{lhr}{\rm \cite[Theorem 1.25]{avv}}
For $-\infty<a<b<\infty$, let $f,\,g: [a,b]\rightarrow \mathbb{R}$ be continuous on $[a,b]$, and be differentiable on $(a,b)$, and let $g'(x)\neq 0$ on $(a,b)$. If $f'(x)/g'(x)$ is increasing (deceasing) on $(a,b)$, then so are
\begin{eqnarray*}
\frac{f(x)-f(a)}{g(x)-g(a)}\,\,\,\,\,\,\,and\,\,\,\,\,\,\,\,\frac{f(x)-f(b)}{g(x)-g(b)}.
\end{eqnarray*}
If $f'(x)/g'(x)$ is strictly monotone, then the monotonicity in the conclusion is also strict.
\end{lemma}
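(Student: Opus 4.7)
The plan is to prove the case where $f'/g'$ is increasing on $(a,b)$ and to show that the first quotient
\[
F(x)=\frac{f(x)-f(a)}{g(x)-g(a)}
\]
is increasing on $(a,b]$. The three remaining cases follow by minor variants: replace $f$ with $-f$ for the decreasing version, and argue symmetrically at the right endpoint for the second quotient.

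First I would verify that $F$ is well defined. Since $g'$ is a derivative and $g'\neq 0$ on $(a,b)$, Darboux's intermediate value theorem for derivatives forces $g'$ to have constant sign on $(a,b)$. Hence $g$ is strictly monotone on $[a,b]$, so $g(x)-g(a)\neq 0$ for every $x\in(a,b]$, and $F$ is differentiable on $(a,b)$.

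The key step is a short computation followed by a comparison. The quotient rule, after factoring $g'(x)(g(x)-g(a))$ out of the numerator, gives
\[
F'(x)\bigl(g(x)-g(a)\bigr)^{2}=g'(x)\bigl(g(x)-g(a)\bigr)\left[\frac{f'(x)}{g'(x)}-F(x)\right].
\]
Because $g'(x)$ and $g(x)-g(a)$ have the same sign on $(a,b)$, the factor $g'(x)(g(x)-g(a))$ is strictly positive, and therefore the sign of $F'(x)$ coincides with the sign of $f'(x)/g'(x)-F(x)$. I would then apply Cauchy's Mean Value Theorem to the pair $(f,g)$ on $[a,x]$ to produce a point $c\in(a,x)$ with $F(x)=f'(c)/g'(c)$. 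Since $f'/g'$ is increasing on $(a,b)$ and $c<x$, this yields $F(x)\le f'(x)/g'(x)$, with strict inequality when $f'/g'$ is strictly increasing. Combining the two observations gives $F'(x)\ge 0$ on $(a,b)$, strictly in the strict case, and hence $F$ is (strictly) increasing on $(a,b]$.

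There is no serious obstacle here; the argument is essentially Cauchy's Mean Value Theorem plus sign bookkeeping. The second quotient $G(x)=(f(x)-f(b))/(g(x)-g(b))$ is handled identically by applying Cauchy's Mean Value Theorem on $[x,b]$: the resulting $c\in(x,b)$ satisfies $f'(c)/g'(c)\ge f'(x)/g'(x)$, and the opposite sign of $g(x)-g(b)$ compensates in the analogous expression for $G'(x)$, again producing a non-negative derivative (strictly positive in the strict case).
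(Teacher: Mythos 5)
Your proof is correct: the Darboux argument legitimizes the quotients, the identity $F'(x)\bigl(g(x)-g(a)\bigr)^{2}=g'(x)\bigl(g(x)-g(a)\bigr)\bigl[f'(x)/g'(x)-F(x)\bigr]$ is right, and the Cauchy Mean Value Theorem comparison (with the sign reversal at the endpoint $b$ handled as you say) delivers all four cases, including the strict ones. The paper itself gives no proof of this lemma --- it is quoted verbatim from \cite[Theorem 1.25]{avv} --- and your argument is essentially the standard proof found there, so there is nothing to reconcile.
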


\begin{theorem}\label{sth1}
Let $f:\UH \rightarrow \BB$ with $f(z)=\frac{z-i}{z+i}$.

(1) For all $x,y\in \UH$
$$j_{\BB}(f(x),f(y))\leq 2 j_{\UH}(x,y),$$
and the constant 2 is the best possible.

(2) For all $x,y\in\UH\setminus\{i\}$
$$j_{\BB\setminus\{0\}}(f(x),f(y))\leq 2 j_{\UH\setminus\{i\}}(x,y), $$
and the constant 2 is the best possible.
\end{theorem}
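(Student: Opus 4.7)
My plan splits each of (1) and (2) into an immediate upper bound and a sharpness assertion. Both upper bounds come directly from the Gehring--Osgood--Palka inequality (Theorem \ref{gplem}) with $m=j$: the Cayley transform $f(z)=(z-i)/(z+i)$ is a M\"obius transformation of $\Hup$ onto $\B$, and since $f(i)=0$ it restricts to a M\"obius transformation of the subdomain $\Hup\setminus\{i\}$ onto $\B\setminus\{0\}$. All the substantial work is therefore the construction of sequences that saturate the factor $2$.

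For sharpness of (1) I would try $x_n=i$ and $y_n=n+i$ with $n\to\infty$. Since $\Im x_n=\Im y_n=1$ and $|x_n-y_n|=n$, one has $j_{\Hup}(x_n,y_n)=\log(1+n)$, which is $\log n$ to leading order. On the image side $f(x_n)=0$, while $f(y_n)=n/(n+2i)$ approaches the boundary point $1=f(\infty)$ at the quadratic rate $1-|f(y_n)|\sim 2/n^{2}$, and $|f(x_n)-f(y_n)|\to 1$. Feeding this into the $j$-definition gives $j_{\B}(f(x_n),f(y_n))\sim\log(n^{2}/2)$, which is $2\log n$ to leading order, so the ratio $j_{\B}/j_{\Hup}$ tends to $2$, matching the upper bound.

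For sharpness of (2) the same mechanism works but the test pair must be chosen so that the new boundary points $i$ and $0$ do not enter the minima in the $j$-definitions. A convenient choice is $x_n=2i,\ y_n=n+i$: then $|x_n-i|=1$ and $f(x_n)=1/3$ are both bounded away from the added boundary, so the relevant minima are the same ones that appeared in (1), while $|x_n-y_n|\sim n$ and $1-|f(y_n)|\sim 2/n^{2}$ are unchanged. The outcome is again $j_{\Hup\setminus\{i\}}(x_n,y_n)\sim\log n$ and $j_{\B\setminus\{0\}}(f(x_n),f(y_n))\sim 2\log n$.

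The main obstacle is locating the extremal sequence. At the infinitesimal level the Cayley transform is in fact a contraction for the $j$-density: from $|f'(z)|=2/|z+i|^{2}$ and $1-|f(z)|^{2}=4\Im z/|z+i|^{2}$ one computes the push-forward ratio
\[
\frac{|f'(z)|\,\Im z}{1-|f(z)|}=\frac{1+|f(z)|}{2}\le 1,
\]
so the factor $2$ cannot come from any local effect. It must instead be driven by the non-local fact that $\partial\B$ has quadratic contact with the image of a horocycle $\{\Im z=\mathrm{const}\}$ at the point $1=f(\infty)$. Once one recognises that the extremal pair should consist of one fixed point together with a point running to $\infty$ at bounded height, the remaining algebra is routine and is driven by the two identities $|f(x)-f(y)|=2|x-y|/(|x+i|\,|y+i|)$ and $1-|f(z)|^{2}=4\Im z/|z+i|^{2}$.
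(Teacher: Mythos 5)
Your proposal is correct and follows essentially the same route as the paper: the upper bounds are quoted from Theorem \ref{gplem}, and sharpness is obtained from a pair consisting of a fixed point and a point at bounded height tending horizontally to $\infty$, whose image approaches $1\in\partial\B$ quadratically; the paper uses $x=t+ai$, $y=ai$ with $t\to\infty$ (and $0<a<\tfrac13$ in part (2) to keep the punctures out of the minima), which is the same mechanism as your choices $i,\,n+i$ and $2i,\,n+i$.
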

\begin{proof}
It suffices to show the sharpness of the inequalities by Theorem \ref{gplem}.

(1) Let $x=t+ai$ and $y=ai$, where $a>0$ and $t>0$. Then
\begin{eqnarray*}
\lim_{t\rightarrow\infty}\frac{j_{\BB}(f(t+ai),f(ai))}{j_{\UH}(t+ai,ai)}&=&\lim_{t\rightarrow\infty}\frac{\log\left(1+\frac{t}{2a(a+1)}\left(\sqrt{t^2+(a+1)^2}+\sqrt{t^2+(a-1)^2}\right)\right)}
{\log(1+\frac ta)}\\
&=&\lim_{t\rightarrow\infty}\frac{\log\left(\frac{2t^2}{2a(a+1)}\right)}{\log(\frac ta)}\\
&=&2.
\end{eqnarray*}
Hence the constant 2 is the best possible.

\medskip

(2) Let $x=t+ai$ and $y=ai$, where $0<a<\frac 13$ and $t>1$. Then
\begin{eqnarray*}
\frac{j_{\BB\setminus\{0\}}(f(x),f(y))}{j_{\UH\setminus\{i\}}(x,y)}=\frac{j_{\BB}(f(x),f(y))}{j_{\UH}(x,y)}.
\end{eqnarray*}
By (1) we obtain that the constant 2 is the best possible.
\end{proof}

\medskip

\begin{theorem} \label{sth2}
Let $f: \BB\rightarrow \UH$ with $f(z)=i\frac{1+z}{1-z}$.

(1) For all $x,\, y\in \BB$
$$
j_{\UH}(f(x),f(y))\le 2 j_{\BB}(x,y)\,,
$$
and the constant $2$ is the best possible.

(2) For all $x,\, y\in \BB\setminus \{0\}$
$$j_{\UH\setminus \{i\}}(f(x),f(y))\le 2 j_{\BB\setminus \{0\}}(x,y)\,,$$
and the constant 2 is the best possible.
\end{theorem}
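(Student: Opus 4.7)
\emph{Overall strategy.} The upper bounds in both (1) and (2) follow at once from Theorem \ref{gplem}, since $f(z)=i(1+z)/(1-z)$ is a M\"obius transformation of $\B$ onto $\Hup$, and its restriction is a M\"obius transformation of $\B\setminus\{0\}$ onto $\Hup\setminus\{i\}$. The only substantive task in each part is to produce a one-parameter family $(x_s,y_s)$ for which the ratio $j_{\Hup}(f(x_s),f(y_s))/j_{\B}(x_s,y_s)$ (and its punctured analogue) tends to $2$ as $s\to 0^+$.

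\emph{Sharpness in (1).} I plan to take the symmetric pair $x_s=1-s$, $y_s=-(1-s)$ on the real diameter of $\B$, with $s\to 0^+$. Both points have distance $s$ to $\partial\B$ and mutual distance $2-2s$, giving $j_{\B}(x_s,y_s)=\log((2-s)/s)$. Their images $f(x_s)=i(2-s)/s$ and $f(y_s)=is/(2-s)$ both lie on the positive imaginary axis, with $d(f(y_s),\partial\Hup)=s/(2-s)$ the smaller of the two boundary distances, and $|f(x_s)-f(y_s)|=4(1-s)/(s(2-s))$. A direct computation then gives $j_{\Hup}(f(x_s),f(y_s))=\log(1+4(1-s)/s^2)$. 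Since the numerator behaves like $2\log(1/s)$ and the denominator like $\log(1/s)$ as $s\to 0^+$, the ratio tends to $2$. The motivating heuristic is that the constant $2$ in Theorem \ref{gplem} is only saturated when both logarithms are dominated by their boundary singularities; this suggests letting $x_s,y_s$ approach opposite boundary points of $\B$ so that $f(x_s)\to\infty$ while $f(y_s)\to 0$ on $\partial\Hup$.

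\emph{Reduction of (2) to (1).} For the same test pair I only need to check that inserting the puncture alters neither $j$-distance. In $\B\setminus\{0\}$, the extra candidates $d(x_s,0)=d(y_s,0)=1-s$ both exceed $s$ for small $s$, so $j_{\B\setminus\{0\}}(x_s,y_s)=j_{\B}(x_s,y_s)$. In $\Hup\setminus\{i\}$, the extra candidates $d(f(x_s),i)=(2-2s)/s$ and $d(f(y_s),i)=(2-2s)/(2-s)$ both exceed $d(f(y_s),\partial\Hup)=s/(2-s)$ once $s$ is small, so $j_{\Hup\setminus\{i\}}(f(x_s),f(y_s))=j_{\Hup}(f(x_s),f(y_s))$. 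Hence the sharpness assertion in (2) reduces to the limit established in (1). The only real obstacle throughout is guessing the extremal sequence; after that the calculations are elementary, and the verifications for the punctured case amount to cheap comparisons of four numbers in each domain.
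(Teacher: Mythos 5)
Your proposal is correct and follows essentially the same route as the paper: the upper bounds come from Theorem \ref{gplem}, and the sharpness is witnessed by the antipodal pair $x=-y=t$ on the real diameter (your $t=1-s$), with the punctured case reduced to the unpunctured one by checking that the distances to the punctures are not the minimizing terms. The only cosmetic difference is that you argue via a limit as $s\to 0^+$, whereas the paper observes that $1+4t/(1-t)^2=\bigl((1+t)/(1-t)\bigr)^2$, so the ratio equals $2$ exactly for every $t\in(0,1)$ -- your own expressions already show this, since $1+4(1-s)/s^2=\bigl((2-s)/s\bigr)^2$.
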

\begin{proof}
By Theorem \ref{gplem}, we only need to show that $2$ is the best possible.

(1) For $t=x=-y, \ t\in (0,1)$, we get
\begin{eqnarray*}
j_{{\UH}}(f(x),f(y))&=&\log\left(1+\frac{4t}{(1-t)^2}\right)\\
&=&2\log\left(1+\frac{2t}{1-t}\right)\\
&=&2j_{\BB}(x,y).
\end{eqnarray*}
Therefore, the constant $2$ is the best possible.

\medskip

(2) Since for $t=x=-y$ and $\frac{1}{2}<t<1$, we have
$$
j_{\UH\setminus \{i\}}(f(x),f(y))=j_{\UH}(f(x),f(y))=2 j_{\BB}(x,y)=2 j_{\BB\setminus \{0\}}(x,y),
$$
the constant $2$ is the best possible.
\end{proof}

\medskip

\begin{theorem} \label{mth2}
Let $a\in {\mathbb H}^2$ and $f: {\mathbb H}^2 \to \BB=f\UH$  be a M\"obius transformation with
$f(a)=0$.

(1) For all $x,y \in {\mathbb H}^2$
\begin{eqnarray*}
\frac{1}{2}j_{{\mathbb H}^2}(x,y)\leq j_{{\mathbb B}^2}(f(x),f(y)) \leq 2 j_{{\mathbb H}^2}(x,y),
\end{eqnarray*}
and the constants $\frac 12$ and $2$ are both the best possible.

(2) For all $x,y \in {\mathbb H}^2\setminus\{a\}$
\begin{eqnarray*}
\frac{1}{2}j_{{\mathbb H}^2\setminus\{a\}}(x,y)\leq j_{{\mathbb B}^2\setminus\{0\}}(f(x),f(y)) \leq 2 j_{{\mathbb H}^2\setminus\{a\}}(x,y),
\end{eqnarray*}
and the constants $\frac 12$ and $2$ are both the best possible.
\end{theorem}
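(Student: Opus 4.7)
Both inequalities in (1) and (2) will follow immediately from the Gehring--Osgood--Palka theorem (Theorem \ref{gplem}), so the entire content is the sharpness of the constants $1/2$ and $2$. Using that rotations of $\mathbb{B}^2$ are $j_{\mathbb{B}^2}$-isometries and that the map $z\mapsto -\bar z$ is a $j_{\mathbb{H}^2}$-isometry, I will first normalize any such Möbius transformation $f$ to the holomorphic form $f(z)=(z-a)/(z-\bar a)$, with $a=\alpha+i\beta$, $\beta>0$. The two constants will then be witnessed by two families of test pairs, in the spirit of the proofs of Theorems \ref{sth1} and \ref{sth2}.

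To witness the upper constant in (1), I will take $y=a$ (so $f(y)=0$) and $x_t=t+i\beta$ with $t\to\infty$; a direct calculation gives $1-|f(x_t)|\sim 2\beta^2/t^2$, so $j_{\mathbb{B}^2}(f(x_t),0)\sim 2\log t$ while $j_{\mathbb{H}^2}(x_t,a)=\log(1+(t-\alpha)/\beta)\sim\log t$, giving the ratio $\to 2$. For the lower constant I will use $g=f^{-1}$, explicitly $g(w)=(a-\bar a w)/(1-w)$, and set $x=g(t)$, $y=g(-t)$ for $t\in(0,1)$. Routine algebra should yield $|x-y|=4t\beta/(1-t^2)$ and $\min\{\Im x,\Im y\}=\beta(1-t)/(1+t)$, whence $j_{\mathbb{H}^2}(x,y)=2\log\frac{1+t}{1-t}$ and $j_{\mathbb{B}^2}(f(x),f(y))=j_{\mathbb{B}^2}(t,-t)=\log\frac{1+t}{1-t}$, so the ratio equals $1/2$ exactly for every such $t$.

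For part (2) I expect the same pairs to work once the punctures are verified to be inactive, and this verification is the main obstacle. In the upper estimate I will replace $y=a$ by any fixed $y_0\in\mathbb{H}^2\setminus\{a\}$ bounded away from $a$; then $|x_t-a|$ and $|f(x_t)|$ appearing in the punctured minima stay bounded well above the dominant terms, preserving the asymptotics. In the lower estimate I will restrict to $t\in(1/2,1)$ and check, using $|x-a|=2t\beta/(1-t)$ and $|y-a|=2t\beta/(1+t)$, that both dominate $\Im y=\beta(1-t)/(1+t)$ for $t$ close to $1$, while $|\pm t|=t>1-t$ on the $\mathbb{B}^2$ side; thus the puncture terms never realise the minima in either $j$-formula and the exact identity ratio $1/2$ persists, completing the sharpness.
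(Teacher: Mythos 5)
Your proposal is correct and follows essentially the same route as the paper: the two-sided bound is quoted from Theorem \ref{gplem}, and sharpness is witnessed by a horizontal ray $t+i\beta$, $t\to\infty$ (giving ratio $\to 2$) and by the pullback of the antipodal pair $\pm t$ under $f^{-1}$ (giving ratio exactly $\tfrac12$), with the punctures checked to be inactive for part (2). The only cosmetic difference is that the paper first normalizes to $a=i$ and then cites Theorems \ref{sth1} and \ref{sth2}, whereas you keep $a$ general and verify the same extremal configurations directly.
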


\begin{proof}
 A M\"obius transformation satisfying the assumptions is of the form
$$
f(z)=e^{i\alpha} \frac{z-a}{z-\bar{a}}
$$
and hence
$$
f^{-1}(z)=\frac{a-\bar{a}e^{-i\alpha}z}{1-e^{-i\alpha}z},
$$
where $\alpha$ is a real constant.
Since $j$-metric is invariant under translations, stretchings of $\UH$ onto itself and rotations of $\BB$ onto itself, we may assume that $a=i$ and $\alpha=0$.
Then we have
$$
f(z)=\frac{z-i}{z+i}\,\,\,\,\,\,\,\,\mbox{and} \,\,\,\,\,\,\,\
f^{-1}(z)=i\frac{1+z}{1-z}.
$$
By Theorem \ref{sth1} and Theorem \ref{sth2}, we get the results immediately.
\end{proof}

\medskip

\begin{lemma}\label{1-3}
Let $m\in(1,\infty)$, $\theta\in(0,\frac{\pi}{2m})$, and $r\in(0,1)$. Then\\
(1) $f_1(\theta)\equiv\frac{m\sin\theta}{1-\sin\theta}-\frac{\sin m\theta}{1-\sin m\theta}$ is decreasing from $(0,\frac{\pi}{2m})$ onto $(-\infty,0)$;\\
(2) $f_2(r)\equiv\frac{1-(1-\sin\theta)r}{1-(1-\sin m\theta)r^m}$ is decreasing from  $(0,1)$ onto $(\frac{\sin\theta}{\sin m\theta},1)$;\\
(3) $f_3(r)\equiv\frac{\log(1+\frac{1-r^m}{r^m\sin m\theta})}{\log(1+\frac{1-r}{r\sin \theta})}$ is decreasing from $(0,1)$ onto $(\frac{m\sin\theta}{\sin m\theta},m)$.
\end{lemma}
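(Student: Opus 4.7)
I would prove the three parts in order, since each feeds into the next: part (1) furnishes the sign needed in part (2), and part (2) identifies the derivative ratio that Lemma \ref{lhr} is applied to in part (3).

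\textbf{Part (1).} Direct differentiation gives
\[
f_1'(\theta) = k\left[\frac{\cos\theta}{(1-\sin\theta)^2} - \frac{\cos k\theta}{(1-\sin k\theta)^2}\right],
\]
so the task reduces to showing that $\psi(t) := \cos t/(1-\sin t)^2$ is strictly increasing on $[0,\pi/2)$. Writing $\psi(t)^2 = (1+\sin t)/(1-\sin t)^3$ and setting $s = \sin t$ reduces this in turn to the monotonicity of $(1+s)/(1-s)^3$ on $[0,1)$, which is a one-line derivative check. Since $0 < \theta < k\theta < \pi/2$, this gives $f_1'(\theta) < 0$. The endpoint values $\lim_{\theta\to 0^+} f_1(\theta) = 0$ and $\lim_{\theta\to \pi/(2k)^-} f_1(\theta) = -\infty$ are read off the formula.

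\textbf{Part (2).} Write $\alpha = 1-\sin\theta$ and $\beta = 1-\sin k\theta$. A direct differentiation shows that the sign of $f_2'(r)$ is opposite to that of
\[
q(r) := \alpha - k\beta\,r^{k-1} + \alpha\beta(k-1)\,r^k,
\]
and a short calculation gives $q'(r) = k(k-1)\beta\,r^{k-2}(\alpha r - 1) < 0$ on $(0,1)$. Hence $q$ is strictly decreasing there, so positivity of $q$ on $(0,1)$ reduces to the single inequality $q(1) > 0$. Expanding and factoring,
\[
q(1) = (1-\sin\theta)\sin k\theta - k\sin\theta(1-\sin k\theta) = -(1-\sin\theta)(1-\sin k\theta)\,f_1(\theta),
\]
which is positive by part (1). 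The boundary values $f_2(0^+) = 1$ and $f_2(1^-) = \sin\theta/\sin k\theta$ follow from substitution.

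\textbf{Part (3) and main obstacle.} I would rewrite $f_3$ as $F/G$ with
\begin{align*}
F(r) &= \log\bigl[1-(1-\sin k\theta)r^k\bigr] - k\log r - \log\sin k\theta,\\
G(r) &= \log\bigl[1-(1-\sin\theta)r\bigr] - \log r - \log\sin\theta,
\end{align*}
so that $F(1) = G(1) = 0$. A direct computation collapses the derivative ratio to
\[
\frac{F'(r)}{G'(r)} = \frac{k\bigl[1-(1-\sin\theta)r\bigr]}{1-(1-\sin k\theta)r^k} = k\,f_2(r),
\]
which is decreasing on $(0,1)$ by part (2). Lemma \ref{lhr} (monotone l'H\^opital) then gives that $f_3 = (F-F(1))/(G-G(1))$ is decreasing, and the limits $f_3(0^+) = k$, $f_3(1^-) = k\sin\theta/\sin k\theta$ follow from the limiting values of $f_2$. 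The main obstacle is the bookkeeping in part (2): one must spot the polynomial $q(r)$ governing the sign of $f_2'(r)$, verify that its minimum on $[0,1]$ sits at $r=1$, and recognize $q(1)$ as $-(1-\sin\theta)(1-\sin k\theta)f_1(\theta)$. This factorization is what makes part (1) the correct auxiliary statement and couples the three parts; everything else is mechanical once the identification is made.
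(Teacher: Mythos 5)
Your proposal is correct and follows essentially the same route as the paper: the same derivative computation and sign analysis in (1), the same reduction of the sign of $f_2'$ to the endpoint value at $r=1$ and its factorization through $f_1(\theta)$ in (2), and the same application of the monotone l'H\^opital rule to $F'/G' = k f_2(r)$ in (3). The only cosmetic difference is in (1), where you establish the monotonicity of $\cos t/(1-\sin t)^2$ by the squaring substitution $s=\sin t$, while the paper differentiates the same quantity with respect to the parameter $k$; both checks are equivalent and equally short.
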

\begin{proof}
(1) By differentiation,
$$f'_1(\theta)=m\left(g(1)-g(m)\right),$$
where $g(m)=\frac{\cos m\theta}{(1-\sin m\theta)^2}$.
Since
$$g'(m)=\frac{\theta(1-\sin m\theta+\cos^2m\theta)}{(1-\sin m\theta)^3}>0,$$
we have $f'_1(\theta)<0$ and hence $f_1$ is decreasing on $(0,{\pi}/{2m})$. The limiting values are clear.

\medskip

(2) By differentiation,
\begin{eqnarray*}
& &\left(1-(1-\sin m\theta)r^m\right)^2f'_2(r)\\
&=&-(m-1)(1-\sin\theta)(1-\sin m\theta)r^m+m(1-\sin m\theta)r^{m-1}-(1-\sin\theta)\\
&\equiv& h(r).
\end{eqnarray*}
Since
$$h'(r)=m(m-1)(1-\sin m\theta)r^{m-2}\left(1-r(1-\sin\theta)\right)>0,$$
we have $h$ is increasing on $(0,1)$ and hence $h(r)\leq h(1)$.
By (1),
$$h(1)=(1-\sin\theta)(1-\sin m\theta)f_1(\theta)<0.$$
Therefore, $f'_2(r)<0$ and hence $f_2$ is decreasing on $(0,1)$. The limiting values are clear.

\medskip

(3) Let $f_3(r)\equiv\frac{g_1(r)}{g_2(r)}$, here $g_1(r)=\log(1+\frac{1-r^m}{r^m\sin m\theta})$ and $g_2(r)=\log(1+\frac{1-r}{r\sin \theta})$. Then
$g_1(1^-)=g_2(1^-)=0$, by differentiation, we have
$$\frac{g'_1(r)}{g'_2(r)}=m f_2(r).$$
Therefore, $f_3$ is decreasing on $(0,1)$ by Lemma \ref{lhr}. By l'H${\rm \hat{o}}$pital's Rule and (2) we easily get the limiting values.
\end{proof}

\medskip

The planar angular domain is defined as
$$S_{\varphi}=\{r e^{i\theta}\in\mathbb{C}:0<\theta<\varphi\,,\,r>0\}.$$

\begin{lemma}\label{rayl}
Let $f:S_{\pi/m}\rightarrow \UH$ with $f(z)=z^m \, (m\geq1)$. Let $x,y\in S_{\pi/m}$ with $\arg(x)=\arg(y)=\theta$. Then for all $x\,,y\in S_{\pi/m}$
\begin{eqnarray*}
\frac{m\sin \theta}{\sin m\theta}j_{S_{\pi/m}}(x,y)\leq j_{\UH}(f(x),f(y))\leq m j_{S_{\pi/m}}(x,y).
\end{eqnarray*}
\end{lemma}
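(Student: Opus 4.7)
The plan is to reduce the asserted double inequality to the monotonicity bounds on the function $f_3$ furnished by Lemma \ref{1-3}(3). The first move is a symmetry reduction. Reflection $\rho$ of the sector $S_{\pi/k}$ in its bisecting ray $\arg z=\pi/(2k)$ is a $j$-isometry of $S_{\pi/k}$, and a short direct calculation gives $f\circ\rho=\sigma\circ f$, where $\sigma(w)=-\bar w$ is a $j$-isometry of $\Hup$. Replacing $(x,y)$ by $(\rho(x),\rho(y))$ therefore preserves both sides of the claim, and we may assume $\theta\in(0,\pi/(2k)]$. With this assumption $\sin\theta\le\sin(\pi/k-\theta)$, so the nearest boundary component of $S_{\pi/k}$ to any point of the ray $\arg z=\theta$ is the positive real axis, whence
$$d(re^{i\theta},\partial S_{\pi/k})=r\sin\theta,\qquad d(r^ke^{ik\theta},\partial\Hup)=r^k\sin k\theta.$$

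Next, assume $x\ne y$ (the case $x=y$ is trivial) and, exploiting the symmetry of the $j$-formula in $x,y$, write $x=re^{i\theta}$, $y=se^{i\theta}$ with $0<r<s$. Then $|x-y|=s-r$ and the relevant minimum boundary distance is $r\sin\theta$; analogously $|f(x)-f(y)|=s^k-r^k$ with minimum boundary distance $r^k\sin k\theta$. Setting $t=r/s\in(0,1)$, a direct substitution yields
\begin{align*}
j_{S_{\pi/k}}(x,y)&=\log\!\left(1+\frac{1-t}{t\sin\theta}\right),\\
j_{\Hup}(f(x),f(y))&=\log\!\left(1+\frac{1-t^k}{t^k\sin k\theta}\right).
\end{align*}

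Dividing, the ratio $j_{\Hup}(f(x),f(y))/j_{S_{\pi/k}}(x,y)$ is precisely the function $f_3(t)$ of Lemma \ref{1-3}(3), and that lemma states $\tfrac{k\sin\theta}{\sin k\theta}<f_3(t)<k$ for every $t\in(0,1)$. This is exactly the desired two-sided estimate. The only part demanding any thought is the opening symmetry reduction to $\theta\le\pi/(2k)$, which is needed both to identify the correct nearer component of $\partial S_{\pi/k}$ and to land in the range of $\theta$ for which Lemma \ref{1-3} is formulated; everything after that is a mechanical unpacking of definitions together with a single invocation of the previous lemma.
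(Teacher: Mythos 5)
Your proof is correct and follows essentially the same route as the paper: reduce by scaling and by the reflection symmetry of the sector to the case $\theta\in(0,\pi/(2k)]$ with $x,y$ on one ray, identify the ratio $j_{\Hup}(f(x),f(y))/j_{S_{\pi/k}}(x,y)$ with the function $f_3$ of Lemma \ref{1-3}(3), and invoke its range $(\tfrac{k\sin\theta}{\sin k\theta},k)$. Your write-up is somewhat more explicit about the reflection symmetry and the identification of the nearer boundary ray, but the substance is identical.
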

\begin{proof}
Since $j-$metric is invariant under stretchings, we may assume that $x=r e^{i\theta}$ and $y=e^{i\theta}$, $0<r<1$. By symmetry, we also assume $0<\theta\leq\frac{\pi}{2m}$. Then
$$\frac{j_{\UH}(f(x),f(y))}{j_{S_{\pi/m}}(x,y)}=\frac{\log(1+\frac{1-r^m}{r^m\sin m\theta})}{\log(1+\frac{1-r}{r\sin \theta})}.$$
By Lemma \ref{1-3}(3), we get the result.
\end{proof}
\medskip

\begin{lemma}\label{n}
Let $n\in\mathbb{N}$, $0<\theta\leq\frac{\pi}{2n}$. Then for $x,y\in \mathbb{C}\setminus\{0\}$,
\begin{eqnarray*}
1+\frac{|x^n-y^n|}{|x|^n\sin n\theta}\leq \left(1+\frac{|x-y|}{|x|\sin\theta}\right)^n.
\end{eqnarray*}
\end{lemma}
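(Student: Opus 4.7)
The plan is to reduce the stated inequality to a single-variable algebraic statement and then to a termwise comparison of binomial expansions.

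Set $w=y/x$, so that $\delta:=|w-1|=|x-y|/|x|$ and $|w^n-1|=|x^n-y^n|/|x|^n$. Writing $w-1=\delta e^{i\varphi}$, the binomial theorem gives
\[
w^n - 1 = \sum_{k=1}^{n}\binom{n}{k}\delta^k e^{ik\varphi},
\]
so the triangle inequality yields $|w^n-1|\leq \sum_{k=1}^{n}\binom{n}{k}\delta^k=(1+\delta)^n-1$. It therefore suffices to prove the purely real inequality
\[
\frac{(1+\delta)^n-1}{\sin n\theta}\leq\left(1+\frac{\delta}{\sin\theta}\right)^n-1 \qquad (\delta\geq 0).
\]

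Expanding both sides by the binomial theorem reduces this to
\[
\sum_{k=1}^{n}\binom{n}{k}\frac{\delta^k}{\sin n\theta}\leq\sum_{k=1}^{n}\binom{n}{k}\frac{\delta^k}{\sin^k\theta},
\]
and since $\delta\geq 0$ it is enough to verify the coefficient inequality $\sin^k\theta\leq\sin n\theta$ for each $k=1,\ldots,n$. But $\sin^k\theta\leq\sin\theta$ because $\sin\theta\in(0,1]$, and $\sin\theta\leq\sin n\theta$ because $\theta\leq n\theta\leq\pi/2$ (using the hypothesis $\theta\leq\pi/(2n)$) and $\sin$ is increasing on $[0,\pi/2]$.

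The only genuinely delicate step is the triangle-inequality reduction that eliminates the angular parameter $\varphi=\arg(w-1)$; note the bound $|w^n-1|\leq(1+\delta)^n-1$ is attained precisely when $y/x$ is real and positive, i.e.\ in the ``same-ray'' configuration of Lemma \ref{rayl}, so no sharpness is lost. After this reduction, the proof collapses to a term-by-term comparison of powers of $\sin\theta$, with no analytic difficulty remaining.
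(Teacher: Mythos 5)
Your proof is correct. Every step checks out: with $w=y/x$ and $\delta=|w-1|$, the triangle inequality applied to $w^n-1=\sum_{k=1}^n\binom{n}{k}(w-1)^k$ gives $|w^n-1|\le(1+\delta)^n-1$, and the remaining real inequality follows from the termwise comparison $\binom{n}{k}\delta^k/\sin n\theta\le\binom{n}{k}\delta^k/\sin^k\theta$, which needs exactly $\sin^k\theta\le\sin\theta\le\sin n\theta$; both of these hold under the hypothesis $0<n\theta\le\pi/2$. (Your parenthetical remark that equality in the first step occurs when $y/x$ is ``real and positive'' is slightly imprecise --- alignment of all the terms $(w-1)^k$ requires $w-1\ge 0$, i.e.\ $y/x\in[1,\infty)$ --- but this aside plays no role in the argument.)

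Your route is genuinely different from the paper's. The paper proves the lemma by induction on $n$: assuming the case $k$, it multiplies by one more factor $1+\frac{|x-y|}{|x|\sin\theta}$, discards cross terms, uses $|x|+|x-y|\ge|y|$ to produce $|x^ky-y^{k+1}|$, and finishes with the triangle inequality together with $\sin\theta,\sin k\theta\le\sin(k+1)\theta$. Your argument instead factors the statement into two independent pieces: a purely complex-analytic bound $1+|w^n-1|\le(1+|w-1|)^n$ --- which is precisely the paper's own Lemma \ref{le4.5} with $z=w$, proved there by the identical binomial-expansion trick --- and a purely trigonometric coefficient comparison $\sin^k\theta\le\sin n\theta$. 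This decomposition is arguably more transparent and makes visible exactly where the hypothesis $\theta\le\pi/(2n)$ enters, whereas the paper's induction interleaves the two ingredients at each step; the paper's version, on the other hand, avoids expanding anything and keeps every line in the same form as the statement being proved.
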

\begin{proof}
The equality holds if $n=1$.

Next, suppose that the inequality holds when $n=m$ . Namely,
\begin{eqnarray}\label{kI}
1+\frac{|x^m-y^m|}{|x|^m\sin m\theta}\leq \left(1+\frac{|x-y|}{|x|\sin\theta}\right)^m,
\end{eqnarray}
where $0<\theta\leq\frac{\pi}{2m}$.

Then, if $n=m+1$, we have  $0<\theta\leq\frac{\pi}{2(m+1)}<\frac{\pi}{2m}$ and by (\ref{kI}),
\begin{eqnarray*}
\left(1+\frac{|x-y|}{|x|\sin\theta}\right)^{m+1}&\geq&\left(1+\frac{|x^m-y^m|}{|x|^m\sin m\theta}\right)\left(1+\frac{|x-y|}{|x|\sin\theta}\right)\\
&\geq& 1+\frac{|x-y|}{|x|\sin\theta}+\frac{|x^m-y^m|}{|x|^m\sin m\theta}\left(1+\frac{|x-y|}{|x|}\right)\\
&\geq& 1+\frac{|x^{m+1}-x^{m}y|}{|x|^{m+1}\sin\theta}+\frac{|x^my-y^{m+1}|}{|x|^{m+1}\sin m\theta}\\
&\geq& 1+\frac{|x^{m+1}-y^{m+1}|}{|x|^{m+1}\sin (m+1)\theta}.
\end{eqnarray*}
This completes the proof by induction.
\end{proof}
\medskip

\begin{theorem}\label{sth4}
Let $f:S_{\pi/m}\rightarrow \UH$ with $f(z)=z^m\,(m\in\mathbb N)$. Then for all $x,y\in S_{\pi/m}$,
$$j_{\UH}(f(x),f(y))\leq m j_{S_{\pi/m}}(x,y),$$
and the constant $m$ is the best possible.
\end{theorem}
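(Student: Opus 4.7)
The plan is to reduce the desired inequality to Lemma \ref{n} by choosing its free parameter $\theta$ to match the geometry of the sector, and then to deduce sharpness directly from the one-ray case treated in Lemma \ref{rayl}.

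First I would set up notation. After relabeling $x$ and $y$ if necessary, assume $\operatorname{Im}(x^{k}) \leq \operatorname{Im}(y^{k})$, and call this minimum $D$. Write $\alpha = \arg x \in (0,\pi/k)$, so that $D = |x|^{k}\sin(k\alpha)$. Let $d = \min\{d(x,\partial S_{\pi/k}), d(y,\partial S_{\pi/k})\}$, and note that $d(x,\partial S_{\pi/k}) = |x|\sin\min\{\alpha,\pi/k-\alpha\}$, since the boundary of the sector is the two rays $\arg z = 0$ and $\arg z = \pi/k$. The key choice is $\theta := \min\{\alpha,\pi/k-\alpha\} \in (0,\pi/(2k)]$. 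This single choice does three things simultaneously: it lies in the admissible range $(0,\pi/(2k)]$ for Lemma \ref{n}; because $k\alpha \in (0,\pi)$ and $\sin$ is symmetric about $\pi/2$, it gives $\sin(k\theta)=\sin(k\alpha)$, so that the LHS of Lemma \ref{n} becomes $1 + |x^{k}-y^{k}|/D$; and by construction $|x|\sin\theta = d(x,\partial S_{\pi/k})$.

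Applying Lemma \ref{n} with this $\theta$ therefore yields
\[
1+\frac{|x^{k}-y^{k}|}{D}
 \;=\; 1+\frac{|x^{k}-y^{k}|}{|x|^{k}\sin(k\theta)}
 \;\leq\; \left(1+\frac{|x-y|}{|x|\sin\theta}\right)^{k}
 \;=\; \left(1+\frac{|x-y|}{d(x,\partial S_{\pi/k})}\right)^{k}
 \;\leq\; \left(1+\frac{|x-y|}{d}\right)^{k},
\]
the final step using $d(x,\partial S_{\pi/k}) \geq d$. Taking logarithms gives the claim $j_{\Hup}(f(x),f(y)) \leq k\,j_{S_{\pi/k}}(x,y)$.

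For sharpness I would specialize to the one-ray configuration from Lemma \ref{rayl}: take $x = re^{i\theta}$ and $y = e^{i\theta}$ with any fixed $\theta \in (0,\pi/(2k)]$. The ratio $j_{\Hup}(f(x),f(y))/j_{S_{\pi/k}}(x,y)$ is then exactly the function $f_{3}(r)$ of Lemma \ref{1-3}(3), which is decreasing in $r$ and tends to $k$ as $r\to 0^{+}$. Thus no smaller constant works. I expect the \emph{only} real subtlety in this proof to be the choice $\theta = \min\{\alpha,\pi/k-\alpha\}$, which exploits the reflection symmetry of the sector about its bisector so that a single application of Lemma \ref{n} aligns correctly with both the imaginary-part denominator on the $\Hup$ side and the boundary-distance denominator on the $S_{\pi/k}$ side; once that alignment is in place the rest is bookkeeping.
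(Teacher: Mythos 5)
Your proof is correct and follows essentially the same route as the paper: the upper bound is obtained by applying Lemma \ref{n} with $\theta=\min\{\arg x,\pi/k-\arg x\}$ (the paper makes the same choice, relying on the same observation that $\sin(k\theta)=\sin(k\arg x)$), and sharpness comes from the one-ray configuration $x=re^{i\theta}$, $y=e^{i\theta}$ with $r\to 0^{+}$ via Lemma \ref{1-3}(3), exactly as in the paper. Your writeup is in fact slightly more explicit than the paper's about why this single choice of $\theta$ aligns both denominators.
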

\begin{proof}
By symmetry, we may assume that $d(f(x),\partial \UH)\leq
d(f(y),\partial \UH)$. Hence by Lemma \ref{n}, we obtain
\begin{eqnarray*}
j_{\UH}(f(x),f(y))&=&\log\left(1+\frac{|x^m-y^m|}{|x|^m\sin m\theta}\right)\\
&\leq& m\log\left(1+\frac{|x-y|}{|x|\sin\theta}\right)\\
&\leq& m j_{S_{\pi/m}}(x,y),
\end{eqnarray*}
where $0<\theta=\min\{\arg(x), \frac{\pi}{m}-\arg(x)\}\le
\frac{\pi}{2m}$.

Let $x=r e^{i\alpha}$ and $y=e^{i\alpha}$, where $0<\alpha<\frac{\pi}{2m}$ and $0<r<1$. Letting $r\rightarrow0$, and
by Lemma \ref{1-3}(3) and Lemma \ref{rayl}, we know that the constant $m$ is the best possible.
\end{proof}

\begin{lemma} \label{le4.5}
 For $z\in\mathbb{C}$ and $p\in\mathbb N$ we have
$$
\log(1+|z^p-1|)\le p\log(1+|z-1|).
$$
\end{lemma}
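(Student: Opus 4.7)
The plan is to reduce the inequality to an algebraic statement by exponentiating, then prove it via the binomial theorem.

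First, since $\log$ is strictly increasing on $(0,\infty)$, the stated inequality is equivalent to
$$1+|z^p-1|\le (1+|z-1|)^p.$$
This is the form I would work with. The natural substitution is $w=z-1$, so that $z=1+w$ and the inequality becomes
$$|(1+w)^p-1|\le (1+|w|)^p-1,$$
which is now a statement about an arbitrary complex number $w$.

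Next, I expand using the binomial theorem:
$$(1+w)^p-1=\sum_{k=1}^{p}\binom{p}{k}w^{k}.$$
Applying the triangle inequality and the fact that the binomial coefficients are nonnegative real numbers yields
$$|(1+w)^p-1|\le \sum_{k=1}^{p}\binom{p}{k}|w|^{k}=(1+|w|)^p-1,$$
where the last equality is the binomial theorem applied to the real number $1+|w|$. This is exactly the desired inequality, and taking logarithms gives the lemma.

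There is no real obstacle here; the only thing to check carefully is the direction of the reduction (monotonicity of $\log$) and that the triangle inequality is the right tool in view of the nonnegativity of the binomial coefficients. One could alternatively induct on $p$, using $|z^{p+1}-1|\le |z|\,|z^p-1|+|z-1|$ together with the bound $|z|\le 1+|z-1|$, but the binomial-theorem approach is more direct and avoids any case analysis.
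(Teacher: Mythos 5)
Your proof is correct and follows essentially the same route as the paper: the substitution $w=z-1$ (the paper writes $z=u+1$), the binomial expansion of $(1+w)^p-1$, and the triangle inequality with nonnegative binomial coefficients. No issues.
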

\begin{proof}
Putting $z=u+1$, we obtain
$$
1+|z^p-1|=1+\left|\sum_{l=1}^p {p\choose l}u^l\right| \le
1+ \sum_{l=1}^p {p\choose l}|u|^l = (1+|u|)^p = (1+|z-1|)^p,
$$
and  the claim follows.
\end{proof}

\begin{lemma} \label{le3.2}
Let $Q_d^*$ denote the class of all polynomials with exact degree
$d(\ d\ge 1)$ which have no zeros inside the disk $\BB$.

If $Q\in Q_d^*$ and $u,v\in \BB$, then
$$
\left|\frac{Q(u)}{Q(v)}-1\right|\le \left(1+\frac{|u-v|}{1-|v|}\right)^d-1.
$$
\end{lemma}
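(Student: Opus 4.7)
The plan is to reduce the inequality to an abstract product bound via the factored form of $Q$. Since $Q\in Q_d^*$ has exact degree $d$ and no zeros in $\B$, I would write $Q(z)=c\prod_{k=1}^{d}(z-z_k)$ with $c\neq 0$ and $|z_k|\geq 1$, so that
$$
\frac{Q(u)}{Q(v)}=\prod_{k=1}^{d}\frac{u-z_k}{v-z_k}=\prod_{k=1}^{d}\left(1+\frac{u-v}{v-z_k}\right).
$$
For each factor the triangle inequality yields $|v-z_k|\geq |z_k|-|v|\geq 1-|v|>0$, hence
$$
\left|\frac{u-v}{v-z_k}\right|\leq \frac{|u-v|}{1-|v|}=:r.
$$

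The remaining task is the purely arithmetic estimate: whenever $w_1,\dots,w_d\in\mathbb{C}$ satisfy $|w_k-1|\leq r$,
$$
|w_1w_2\cdots w_d-1|\leq (1+r)^d-1.
$$
I would prove this by induction on $d$. The base case $d=1$ is immediate. For the inductive step, set $P_{d-1}=\prod_{k=1}^{d-1}w_k$ and use the identity $P_{d-1}w_d-1=(P_{d-1}-1)w_d+(w_d-1)$ together with $|w_d|\leq 1+r$ and the inductive hypothesis $|P_{d-1}-1|\leq (1+r)^{d-1}-1$; a short arithmetic check produces the bound $(1+r)^d-1$. Applying this estimate to the factors $w_k=1+(u-v)/(v-z_k)$ from the first step closes the proof.

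I do not foresee a real obstacle: the whole difficulty is compressed into the elementary step $|v-z_k|\geq 1-|v|$, which is the quantitative expression of the ``no zeros inside $\B$'' hypothesis. As a stylistic alternative to induction, one can expand
$$
\prod_{k=1}^{d}(1+b_k)-1=\sum_{\emptyset\neq S\subseteq\{1,\dots,d\}}\prod_{k\in S}b_k
$$
and apply the triangle inequality termwise to obtain the same bound $\sum_{j=1}^{d}\binom{d}{j}r^j=(1+r)^d-1$; this is essentially a multivariate extension of Lemma \ref{le4.5}.
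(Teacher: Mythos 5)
Your proposal is correct and follows essentially the same route as the paper: factor $Q$ over its zeros, use $|v-z_k|\ge |z_k|-|v|\ge 1-|v|$ to control each factor, and bound the product minus one by $(1+r)^d-1$ via the subset expansion and the triangle inequality (your ``stylistic alternative'' is literally the paper's argument, and your induction is an equivalent repackaging of it). No gaps.
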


\begin{proof} \ A polynomial $Q(z)$ with zeros $\{-a_l\}$($l=1,2,\cdots,d$)
has a representation in the form
$$
Q(z)=C\prod_{l=1}^d (z+a_l),\,\, C\neq 0.
$$

Since $Q(z)\neq 0$ for $z\in \BB$, we have $|a_l|\ge
1$.

Therefore,
\begin{eqnarray*}
\left|\frac{Q(u)}{Q(v)}-1\right|&=&\left|\prod_{l=1}^d\frac{u+a_l}{v+a_l}-1\right|\\
&=&\left|\prod_{l=1}^d \left(1+\frac{u-v}{a_l+v}\right)-1\right|\\
&=&\left|\sum \prod(\cdot)(\cdot)\right|\le \sum\prod|(\cdot)||(\cdot)|\\
&=&\prod_{l=1}^d \left(1+\frac{|u-v|}{|a_l+v|}\right)-1\\
&\le&\prod_{l=1}^d \left(1+\frac{|u-v|}{|a_l|-|v|}\right)-1\\
&\le&\left(1+\frac{|u-v|}{1-|v|}\right)^d-1.
\end{eqnarray*}
\end{proof}

\begin{remark} \label{rmk3.3}
Note that, since $\left|\frac{Q(u)}{Q(v)}-1\right|\ge \left|\frac{Q(u)}{Q(v)}\right|-1$,
we also obtain
$$
\left|\frac{Q(u)}{Q(v)}\right|\le \left(1+\frac{|u-v|}{1-|v|}\right)^d.
$$
\end{remark}

\begin{nonsec}
\begin{proof}[\bf Proof of Theorem \ref{sth5}]
For $x,\,y\in \BB\setminus \{0\}$ , we have
$$
j_{\BB\setminus \{0\}}(x,y)=\log\left(1+\frac{|x-y|}{\min\{|x|,|y|,1-|x|,1-|y|\}}\right)
$$
and
$$
j_{\BB\setminus \{0\}}(f(x),f(y))=\log\left(1+\frac{|f(x)-f(y)|}{T}\right),
$$
where $T= \min\{|f(x)|,|f(y)|,1-|f(x)|,1-|f(y)|\}$.

{\it Case 1.} $T=1-|f(x)|$.
Since
\begin{eqnarray*}
|f(x)-f(y)|&=&|x-y|\left|\sum_{l=1}^{p} a_l(\sum_{\nu+\mu=l-1} x^\nu y^\mu)\right|\\
&\le&|x-y|\sum_{l=1}^{\infty} |a_l|(\sum_{\nu=0}^{l-1} |x|^\nu)
\end{eqnarray*}
and
\begin{eqnarray*}
1-|f(x)|&\ge& \sum_{l=1}^{p} |a_l|-\sum_{l=1}^{p} |a_l||x|^l\\
&=&(1-|x|)\sum_{l=1}^{p} |a_l|(\sum_{\nu=0}^{l-1} |x|^\nu),
\end{eqnarray*}
we obtain
\begin{eqnarray*}
j_{\BB\setminus \{0\}}(f(x),f(y))&\le& \log\left(1+\frac{|x-y|}{1-|x|}\right)\\
&\le&j_{\BB\setminus \{0\}}(x,y).
\end{eqnarray*}

{\it Case 2.} $T=1-|f(y)|$.
This case can be treated in the same way as Case 1.

{\it Case 3.} $T=|f(y)|$.
Now we assume that $0$ is $m$-th order zero of $f$. Since $f$
has no other zeros in $\BB$ by the assumption of this theorem, we get
$f(z)=z^m Q(z), \ \ Q\in
Q_d^*, \ m+d=p$,  here $Q_d^*$ is as in Lemma \ref{le3.2}.

By Lemma \ref{le3.2} and Remark \ref{rmk3.3}, it follows
\begin{eqnarray*}
\frac{|f(x)-f(y)|}{|f(y)|}&=&\left|\frac{x^m Q(x)}{y^mQ(y)}-1\right|\\
&=&\left|\left(\frac{x^m}{y^m}-1\right)\frac{Q(x)}{Q(y)}+\frac{Q(x)}{Q(y)}-1\right|\\
&\le&\left|\frac{x^m}{y^m}-1\right|\left|\frac{Q(x)}{Q(y)}\right|+\left|\frac{Q(x)}{Q(y)}-1\right|\\
&\le&\left(1+\left|\frac{x^m}{y^m}-1\right|\right)\left(1+\frac{|x-y|}{1-|y|}\right)^d-1.
\end{eqnarray*}

Therefore, by Lemma \ref{le4.5}, we have
\begin{eqnarray*}
j_{\BB\setminus \{0\}}(f(x),f(y))&=&\log\left(1+\frac{|f(x)-f(y)|}{|f(y)|}\right)\\
&\le& d\log\left(1+\frac{|x-y|}{1-|y|}\right)+\log\left(1+\left|\frac{x^m}{y^m}-1\right|\right)\\
&\le& d\log \left(1+\frac{|x-y|}{1-|y|}\right)+m\log\left(1+\frac{|x-y|}{|y|}\right)\\
&\le& p j_{\BB\setminus \{0\}}(x,y),
\end{eqnarray*}
and the proof for case 3 is done.

{\it Case 4.} $T=|f(x)|$.
This case can be treated in the same way as Case 3.

For the sharpness of the inequality, let $f(z)=z^p\,( \ p\in\mathbb N)$. For $s,\,t\in(0,\frac 12)$ and $s<t$, we have
\begin{eqnarray*}
j_{\BB\setminus \{0\}}(f(t),f(s))=\log\left(1+\frac{t^p-s^p}{s^p}\right) =p
\log\left(\frac{t}{s}\right)= pj_{\BB\setminus\{0\}}(t,s).
\end{eqnarray*}
Therefore the constant $p$ is sharp.
\end{proof}
\end{nonsec}
\medskip

\begin{nonsec}
\begin{proof}[\bf Proof of Theorem \ref{mth3}]
Let $r=\max\{|x|,|y|\}$ and suppose that $|f(x)|\ge
|f(y)|$. Then
$$
j_{\BB}(x,y)=\log\left(1+\frac{|x-y|}{1-r}\right)
$$
and
$$
j_{\BB}(f(x),f(y))=\log\left(1+\frac{|f(x)-f(y)|}{1-|f(x)|}\right).
$$
The next two inequalities follow by the proof of Theorem \ref{sth5} Case 1 (replace $p$ with $\infty$):
\begin{eqnarray*}
|f(x)-f(y)|
\le|x-y|\sum_{l=1}^{\infty} |a_l|(\sum_{\nu=0}^{l-1} |x|^\nu)
\end{eqnarray*}
and
\begin{eqnarray*}
1-|f(x)|
\ge(1-|x|)\sum_{l=1}^{\infty} |a_l|(\sum_{\nu=0}^{l-1} |x|^\nu).
\end{eqnarray*}

Therefore,
\begin{eqnarray*}
j_{\BB}(f(x),f(y))\le
\log\left(1+\frac{|x-y|}{1-|x|}\right)\le j_{\BB}(x,y).
\end{eqnarray*}

Obviously, the equality holds for the identity map and hence the sharpness is clear.
\end{proof}
\end{nonsec}
\medskip

\begin{remark}
{\bf 1.} {\rm Let $f:\BB\rightarrow \BB$ be an analytic function. According to the Schwarz lemma, for all $x\,,y\in\BB$ we have
$$\rho_{\BB}(f(x),f(y))\leq \rho_{\BB}(x,y)\,,$$
and further, by the inequality \eqref{jrho1},
\beq\label{jb}
j_{\BB}(f(x),f(y))\leq 2 j_{\BB}(x,y)\, .
\eeq
Note that Theorem \ref{mth3} gives a sufficient
condition for the mapping to be a contraction, i.e., to have
the Lipschitz constant at most $1\,.$ }

{\bf 2.} {\rm Following \cite[Example 3.10]{v0} consider the
exponential function $f: \BB\rightarrow \BB\setminus\{0\}=f\BB$ with $f(z)=\exp\left(\frac{z+1}{z-1}\right)\,.$ Let $z_l=\frac{e^l-1}{e^l+1}\,(l\in \mathbb{N})$. Then
$$j_{\BB}(z_l,z_{l+1})=\log\frac{e^{l+1}+1}{e^l+1}\rightarrow 1\,\,\,(l\rightarrow \infty)$$
and
$$j_{f\BB}(f(z_l),f(z_{l+1}))=e^l(e-1)\rightarrow\infty\,\,\,(l\rightarrow \infty).$$
Therefore, in (\ref{jb}) $j_{\BB}(f(x),f(y))$ can not be replaced with $j_{f\BB}(f(x),f(y))$ if $f\BB\subset\BB$ has isolated boundary points.

Furthermore, we have $|f(0)|+|f'(0)|=3/e>1$ and for $0<t<1$
\begin{eqnarray*}
 \lim_{t\rightarrow 0}\frac{j_{\BB}(f(0),f(t))}{j_{\BB}(0,t)}&=&\lim_{t\rightarrow 0}\frac{\log\left(1+\frac{e^{-1}-e^{(t+1)/(t-1)}}{1-e^{-1}}\right)}{\log\left(1+\frac{t}{1-t}\right)}\\
 &=&\lim_{t\rightarrow 0}\frac{2}{1-t}\frac{1}{e^{-(t+1)/(t-1)}-1}\\
 &=&\frac{2}{e-1}>1.
\end{eqnarray*}
Therefore, in Theorem \ref{mth3} the hypothesis $\sum_{l=0}^{\infty}|a_l|\le 1$ cannot be removed.
}
\end{remark}
\medskip
\begin{theorem}\label{sth6}
Let $f(z)=a+r^2\frac{(z-a)}{|z-a|^2}$ be the inversion in $S^{n-1}(a,r)$ with ${\rm Im}\,a=0$. Then $f(\Hn)=\Hn$ and for all $x,y\in \Hn$,
$$j_{\Hn}(f(x),f(y))\leq 2 j_{\Hn}(x,y). $$
The constant 2 is the best possible.
\end{theorem}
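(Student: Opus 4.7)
The fact $f(\mathbb{H}^n) = \mathbb{H}^n$ is immediate from the formula: since $\Im a = 0$, the $n$-th coordinate of $f(x)$ is $r^{2} x_{n}/|x-a|^{2}$, which has the same sign as $x_{n}$. The upper bound $j_{\mathbb{H}^n}(f(x),f(y)) \le 2\, j_{\mathbb{H}^n}(x,y)$ then follows at once from Theorem \ref{gplem}, since $f$ is a M\"obius self-map of $\mathbb{H}^n$. So the only real content of the theorem is proving that the constant $2$ cannot be improved.

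To show sharpness I would first normalize. The $j_{\mathbb{H}^n}$-metric is invariant under translations by vectors of $\partial\mathbb{H}^n$ and under positive dilations, and these operations conjugate inversions in spheres with centers on $\partial\mathbb{H}^n$ into inversions of the same type, so one may assume $a=0$ and $r=1$. Then $f(z) = z/|z|^{2}$ and a direct calculation gives
\[
|f(u)-f(v)| = \frac{|u-v|}{|u|\,|v|}, \qquad (f(u))_{n} = \frac{u_{n}}{|u|^{2}}.
\]

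The plan is then to test the inequality on $x = e_{1} + e_{n}$ and $y_{M} = M e_{1} + e_{n}$ with $M \to \infty$. One finds $|x|^{2} = 2$, $|y_{M}|^{2} = M^{2}+1$, $(f(x))_{n} = 1/2$, and $(f(y_{M}))_{n} = 1/(M^{2}+1)$, so that for $M$ sufficiently large
\[
j_{\mathbb{H}^n}(x,y_{M}) = \log M, \qquad j_{\mathbb{H}^n}(f(x),f(y_{M})) = \log\!\Bigl(1 + \tfrac{(M-1)\sqrt{M^{2}+1}}{\sqrt 2}\Bigr) \sim 2\log M,
\]
whence the ratio tends to $2$. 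The proof has no substantial obstacle once Theorem \ref{gplem} is in hand; the only real step is picking the right configuration. The intuition is that keeping $x$ and $y_{M}$ at the same fixed height above $\partial\mathbb{H}^n$ while sending $y_{M}$ off to infinity drives $f(y_{M})$ arbitrarily close to $0 \in \partial\mathbb{H}^n$, so the distance-to-boundary in the denominator shrinks like $1/M^{2}$ while $|f(x)-f(y_{M})|$ stays bounded below, doubling the logarithmic rate.
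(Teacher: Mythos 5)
Your proposal is correct and follows essentially the same route as the paper: upper bound from Theorem \ref{gplem}, normalization to $a=0$ (and $r=1$), and sharpness via two points at unit height with one sent horizontally to infinity (the paper uses $x=e_n$, $y=te_1+e_n$, $t\to\infty$, which yields the same limit $2$). All your computations check out.
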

\begin{proof}
The inequality is clear by Theorem \ref{gplem}. Without loss of generality, we may assume that $a=0$ and $r=1$.

Putting $x=i$, $y=t+i$, $t>0$, we have
$$\lim_{t\to\infty}\frac{j_{\Hn}(f(i),f(t+i))}{j_{\Hn}(i,t+i)}=\lim_{t\to\infty}\frac{\log(1+t\sqrt{1+t^2})}{\log(1+t)}=\lim_{t\to\infty}\frac{\log t^2}{\log t}=2.$$
Hence the constant 2 is the best possible.

\end{proof}

\section{Proof of Theorem \ref{mth1}}

In order to prove Theorem \ref{mth1}, we first need some lemmas.

\begin{lemma} \label{le2.01}{\rm \cite[Theorem 3.5.1]{b}}
Let $f$ be a M\"obius transformation and $f(\Bn)=\Bn$. Then
$$f(x)=(\sigma x)A,$$
where $\sigma$ is an inversion in some sphere orthogonal to $S^{n-1}$ and $A$ is an orthogonal matrix.
\end{lemma}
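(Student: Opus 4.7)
My strategy is twofold: (i) prove that any M\"obius self-map of $\mathbb{B}^n$ fixing the origin is the restriction of an orthogonal linear map, and (ii) reduce the general case to (i) by pre-composing $f$ with the explicit inversion defined in \eqref{iv}.

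For step (ii), set $b = f^{-1}(0)\in\mathbb{B}^n$. If $b=0$, interpret $\sigma$ as the identity and proceed directly to step (i). Otherwise, let $\sigma = \sigma_b$ as in \eqref{iv}: the inversion in $S^{n-1}(b^*,r)$ with $r^2=|b|^{-2}-1$. As observed immediately after \eqref{iv}, this sphere is orthogonal to $S^{n-1}$, $\sigma$ preserves $\mathbb{B}^n$, and $\sigma(b)=0$. Then $g:=f\circ\sigma^{-1}$ is a M\"obius self-map of $\mathbb{B}^n$ with $g(0)=0$, and if step (i) gives $g(x)=xA$ with $A$ orthogonal, then $f=g\circ\sigma$ yields $f(x)=(\sigma x)A$, which is exactly the assertion.

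For step (i), the goal is to show that $g$ fixes $\infty$ and is therefore affine. Because $g$ is a homeomorphism of $\mathbb{B}^n$ onto itself, continuity of its M\"obius extension to $\overline{\mathbb{R}}^n$ forces $g(S^{n-1})=S^{n-1}$. Let $J(x)=x/|x|^2$ denote the inversion in $S^{n-1}$. M\"obius transformations preserve the symmetry relation across a sphere (two points are symmetric across $\Sigma$ iff every generalized sphere through them meets $\Sigma$ orthogonally, a property manifestly preserved by M\"obius maps); applied with $\Sigma=S^{n-1}$, the fact that $g$ fixes $S^{n-1}$ setwise gives $J\circ g=g\circ J$ on $\overline{\mathbb{R}}^n$. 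Evaluating at $0$ and using $g(0)=0$, $J(0)=\infty$, one obtains $g(\infty)=\infty$. A M\"obius transformation of $\overline{\mathbb{R}}^n$ fixing $\infty$ is a Euclidean similarity $x\mapsto\lambda xA+c$ with $A\in O(n)$, $\lambda>0$, $c\in\mathbb{R}^n$. The constraint $g(0)=0$ forces $c=0$, and $g(S^{n-1})=S^{n-1}$ forces $\lambda=1$, so $g(x)=xA$.

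The main obstacle is the commutation identity $J\circ g=g\circ J$, since it is the only step that uses genuine geometry rather than algebraic manipulation of the formula \eqref{iv}. It can be justified either via the M\"obius-invariance of the symmetry relation above, or equivalently by observing that $g\circ J\circ g^{-1}$ is a M\"obius involution whose fixed sphere is $g(S^{n-1})=S^{n-1}$, and any such involution is unique, hence equal to $J$. Once this commutation is in hand, everything else reduces to routine bookkeeping together with the classical classification of M\"obius maps fixing $\infty$.
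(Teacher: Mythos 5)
Your proof is correct, and since the paper gives no argument of its own for this lemma---it is quoted verbatim from Beardon \cite[Theorem 3.5.1]{b}---the relevant comparison is with the cited source, whose proof yours essentially reproduces: reduce to the origin-fixing case via the canonical inversion $\sigma_b$ of \eqref{iv}, use invariance of symmetry in $S^{n-1}$ (inverse points) to get $g(\infty)=\infty$, and conclude via the classification of M\"obius maps fixing $\infty$ as Euclidean similarities. The only cosmetic point is the case $f(0)=0$, where the identity is not literally an inversion; this is harmless here, since the paper disposes of $a=0$ separately before invoking the lemma.
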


\begin{lemma}\label{le2.1}
Let $a,b\in \mathbb B^n$. Then

(1)
$$
|a|^2|b-a^*|^2-|b-a|^2=(1-|a|^2)(1-|b|^2);
$$

(2)
$$
\frac{||b|-|a||}{1-|a||b|}\le \frac{|b-a|}{|a||b-a^*|}\le
\frac{|b|+|a|}{1+|a||b|}.
$$
\end{lemma}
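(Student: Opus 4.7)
The plan is to attack both parts by direct algebraic manipulation, using the special relationship between $a$ and its reflection $a^\ast = a/|a|^2$ in the unit sphere.

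For part (1), I would just expand both sides. Writing $a\cdot b$ for the Euclidean inner product, we have
\[
|a|^2|b-a^\ast|^2 = |a|^2\bigl(|b|^2 - 2\, b\cdot a^\ast + |a^\ast|^2\bigr) = |a|^2|b|^2 - 2\, a\cdot b + 1,
\]
using $a^\ast = a/|a|^2$ so that $|a|^2 a^\ast = a$ and $|a|^2|a^\ast|^2=1$. On the other hand $|b-a|^2 = |b|^2 - 2\, a\cdot b + |a|^2$. Subtracting gives
\[
|a|^2|b-a^\ast|^2 - |b-a|^2 = 1 - |a|^2 - |b|^2 + |a|^2|b|^2 = (1-|a|^2)(1-|b|^2),
\]
as claimed.

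For part (2), the key observation is that part (1) rewrites the ratio of interest in a very convenient form. Setting $t = |b-a|/\bigl(|a||b-a^\ast|\bigr)$, part (1) yields
\[
t^2 = \frac{|b-a|^2}{|b-a|^2 + (1-|a|^2)(1-|b|^2)},
\]
which is a monotonically increasing function of $|b-a|^2$. So the two inequalities in (2) reduce to the two-sided bound $\bigl||b|-|a|\bigr| \le |b-a| \le |b|+|a|$, i.e., the triangle inequality in its usual and reverse forms.

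It then remains to carry out the two boundary substitutions. Substituting $|b-a|^2 = (|b|-|a|)^2$ into the denominator $|b-a|^2 + (1-|a|^2)(1-|b|^2)$ collapses it to $(1-|a||b|)^2$, which yields the lower bound $t \ge \bigl||b|-|a|\bigr|/(1-|a||b|)$; substituting $|b-a|^2 = (|b|+|a|)^2$ collapses it to $(1+|a||b|)^2$, giving the upper bound $t \le (|b|+|a|)/(1+|a||b|)$. No step here looks delicate; the only thing to be careful about is checking that the denominators $1\pm|a||b|$ come out as perfect squares after the substitution, which is exactly what part (1) was designed to guarantee. The main (and only) obstacle is recognizing that part (1) is the right identity to pivot on — once that is seen, part (2) is essentially the triangle inequality in disguise.
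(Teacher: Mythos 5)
Your proof is correct. Part (1) is the same direct expansion the paper uses. For part (2) the paper gives no argument at all: it simply cites an exercise in Vuorinen's book (\cite[Exercise 2.52, p.32]{v}), whereas you derive (2) from (1) in a self-contained way. Your reduction is sound: writing $t=|b-a|/(|a||b-a^*|)$, part (1) gives $t^2=|b-a|^2/\bigl(|b-a|^2+(1-|a|^2)(1-|b|^2)\bigr)$, and since $(1-|a|^2)(1-|b|^2)>0$ for $a,b\in\mathbb{B}^n$, the map $s\mapsto s/(s+c)$ is increasing, so the triangle inequality $(|b|-|a|)^2\le|b-a|^2\le(|b|+|a|)^2$ does the rest; the two boundary denominators do collapse to $(1-|a||b|)^2$ and $(1+|a||b|)^2$ exactly as you claim. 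The only (cosmetic) caveat is that the quantity $|a||b-a^*|$ requires $a\neq 0$ to make literal sense, a point the paper also glosses over. Your version has the advantage of making the lemma independent of the external reference, and it exhibits (2) as a genuine corollary of (1) rather than a separately imported fact.
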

\begin{proof}(1) By calculation, we have
\begin{eqnarray*}
& &|a|^2|b-a^*|^2-|b-a|^2\\
&=&|a|^2\left(|b|^2+\frac{1}{|a|^2}-\frac{2( b\cdot a)}{|a|^2}\right)-\left(|b|^2+|a|^2-2 (b\cdot a)\right)\\
&=&1+|a|^2|b|^2-|a|^2-|b|^2\\
&=&(1-|a|^2)(1-|b|^2).
\end{eqnarray*}

(2) This can be directly obtained by \cite[Exercise 2.52]{v}.
\end{proof}
\medskip

\begin{lemma} \label{le2.3}
Let $c,d\in (0,1), \theta\in (0,1]$.

(1)$f(\theta)\equiv\frac{\log\left(1+\frac{2cd\theta}{1-cd}\right)}{\log\left(1+\frac{2d\theta}{1-d}\right)}$
is increasing. In particular,
\begin{eqnarray*}
\frac{\log\left(1+\frac{2cd\theta}{1-cd}\right)}{\log\left(1+\frac{2d\theta}{1-d}\right)}
\le\frac{\log\left(1+\frac{2cd}{1-cd}\right)}{\log\left(1+\frac{2d}{1-d}\right)}.
\end{eqnarray*}

(2) $g(\theta)\equiv\frac{{ \arth }(c\theta)}{{\arth}\theta}$ is
decreasing. In particular,
\begin{eqnarray*}
\frac{{ \arth }(c\theta)}{{ \arth }
\theta}\le c.
\end{eqnarray*}

(3)
$$
  \left(1+\frac{2cd\theta}{1-cd}\right)\left(1+\frac{c(1-d)}{1+cd}\right)
\le 1+\frac{c(1-d)+2cd\theta}{1-cd}\,.
$$
\end{lemma}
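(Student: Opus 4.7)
My plan is to dispatch the three items separately, using the monotone form of l'H\^opital's rule (Lemma~\ref{lhr}) for parts (1) and (2) and reducing part (3) to elementary algebra.

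For part (1), I would set $A=2cd/(1-cd)$ and $B=2d/(1-d)$; the assumption $c<1$ yields $0<A<B$. Since both $\log(1+A\theta)$ and $\log(1+B\theta)$ vanish at $\theta=0$, Lemma~\ref{lhr} transfers monotonicity from the ratio of derivatives $A(1+B\theta)/[B(1+A\theta)]$ to $f(\theta)$ itself. The function $(1+B\theta)/(1+A\theta)$ has derivative $(B-A)/(1+A\theta)^2>0$, so the ratio of derivatives is increasing, whence $f$ is increasing on $(0,1]$; the displayed inequality is then just $f(\theta)\le f(1)$.

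For part (2), the same strategy applies: both $\arth(c\theta)$ and $\arth\theta$ vanish at $0$, so monotonicity of $g$ reduces via Lemma~\ref{lhr} to that of $(\arth(c\theta))'/(\arth\theta)'=c(1-\theta^2)/(1-c^2\theta^2)$. A short differentiation shows this ratio has derivative proportional to $2\theta(c^2-1)<0$, so the ratio is decreasing and hence so is $g$. Passing to the limit $\theta\to 0^+$, which equals $c$ by l'H\^opital (or a Taylor expansion), yields the bound $g(\theta)\le c$.

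For part (3), I would expand the product on the left, subtract the right-hand side, and factor. After pulling out the positive quantity $c(1-d)/[(1-cd)(1+cd)]$, the difference collapses to a scalar multiple of $2cd(\theta-1)$, which is nonpositive precisely because $\theta\le 1$. I foresee no real obstacle: each part is either a direct invocation of the monotone l'H\^opital rule with a short derivative computation, or an algebraic identity that terminates at the standing hypothesis $\theta\le 1$. The only mild care needed is in verifying the sign conditions $A<B$ in (1) and $c^2-1<0$ in (2), which orient the monotonicities in the correct direction.
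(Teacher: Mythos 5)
Your proposal is correct and follows essentially the same route as the paper: parts (1) and (2) via the monotone l'H\^opital rule applied to the same derivative ratios (the paper writes $f_1'/f_2'=1-\frac{1-c}{1-cd+2cd\theta}$ and $g_1'/g_2'=\frac{1}{c}\bigl(1-\frac{1-c^2}{1-c^2\theta^2}\bigr)$, which are just rearrangements of your expressions), and part (3) by direct algebra. The only difference is that for (3) the paper merely says ``easy to prove by direct calculation,'' whereas you actually exhibit the factorization $\mathrm{RHS}-\mathrm{LHS}=\frac{2c^2d(1-d)(1-\theta)}{(1-cd)(1+cd)}\ge 0$; that computation checks out.
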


\begin{proof} (1)
Let $f_1(\theta)=\log\left(1+\frac{2cd\theta}{1-cd}\right)$ and
$f_2(\theta)=\log\left(1+\frac{2d\theta}{1-d}\right)$. Then we have
$f_1(0^+)=f_2(0^+)=0$ and
$$
\frac{f'_1(\theta)}{f'_2(\theta)}=1-\frac{1-c}{1-cd+2cd\theta},
$$
which is clearly increasing in $\theta$. Therefore,
the monotonicity of $f$ immediately follows by Lemma \ref{lhr}. The inequality follows by the monotonicity of $f$.
\medskip

(2) Let $g_1(\theta)={ \arth }(c\theta)$ and
$g_2(\theta)={\arth}\theta$. Then we have
$g_1(0^+)=g_2(0^+)=0$ and
$$
\frac{g'_1(\theta)}{g'_2(\theta)}=\frac{1}{c}\left(1-\frac{1-c^2}{1-c^2\theta^2}\right),
$$
which is clearly decreasing in $\theta$. Therefore,
the monotonicity of $g$ follows by Lemma \ref{lhr}. The inequality immediately follows by the monotonicity of $g$ and l'H$\rm\hat{o}$pital's Rule .
\medskip

(3) This inequality can be easily proved by direct calculation.
\end{proof}
\medskip

Now we are in a position to give a short proof of
Theorem \ref{mth1}.

\begin{nonsec}{\bf Proof of Theorem \ref{mth1}.}
{\rm The claim is trivial for $a=0$, therefore, we only need to consider $a\neq 0$.
Since $j$-metric is invariant under orthogonal transformations and by Lemma \ref{le2.01}, for $x,y,a\in \mathbb B^n$, we have
$$j_{\mathbb B^n}(f(x),f(y))=j_{\mathbb B^n}(\sigma_a(x),\sigma_a(y)),$$
where $\sigma_a(x)$ is an inversion in the sphere $S^{n-1}(a^*,\sqrt{|a|^{-2}-1})$ orthogonal to $S^{n-1}$.
Thus, it suffices to estimate the
expression
$$
J(x,y;a)\equiv\frac{j_{\mathbb B^n}(\sigma_a(x),\sigma_a(y))}{j_{\mathbb
B^n}(x,y)}=\frac{\log\left(1+\frac{|\sigma_a(x)-\sigma_a(y)|}{\min\{1-|\sigma_a(x)|,1-|\sigma_a(y)|\}}\right)}
{\log\left(1+\frac{|x-y|}{\min\{1-|x|,1-|y|\}}\right)}.
$$

Let $r=\max\{|x|, |y|\}$ and suppose $|\sigma_a(x)|\ge|\sigma_a(y)|$. Then by (\ref{ivd}), we have
$$
\min\{1-|\sigma_a(x)|,1-|\sigma_a(y)|\}=1-|\sigma_a(x)|=\frac{|a||x-a^*|-|x-a|}{|a||x-a^*|}.
$$

We first prove the right-hand side of the inequality.
By Lemma \ref{le2.1}, we get
\begin{eqnarray*}
j_{\Bn}(\sigma_a(x),\sigma_a(y))&=&\log\left(1+\frac{(1-|a|^2)|x-y|}{|a||y-a^*|(|a||x-a^*|-|x-a|)}\right)\\
&=&\log\left(1+\frac{|x-y|(|a||x-a^*|+|x-a|)}{|a||y-a^*|(1-|x|^2)}\right)\\
&=&\log\left(1+\frac{|x-y||x-a^*|}{(1-|x|^2)|y-a^*|}\left(1+\frac{|x-a|}{|a||x-a^*|}\right)\right)\\
&\le&\log\left(1+\frac{|x-y|}{1-r^2}\left(1+\frac{|x-y|}{|y-a^*|}\right)\left(1+\frac{|x|+|a|}{1+|a||x|}\right)\right)\\
&\le&\log\left(1+\frac{|x-y|}{1-r}\left(1+\frac{|a||x-y|}{1-|a|r}\right)\left(1+\frac{|a|(1-r)}{1+|a|r}\right)\right).
\end{eqnarray*}
Then
\begin{eqnarray*}
J(x,y;a)&\le&
\frac{\log\left(1+\frac{|x-y|}{1-r}\left(1+\frac{|a||x-y|}{1-|a|r}\right)\left(1+\frac{|a|(1-r)}{1+|a|r}\right)\right)}{\log\left(1+\frac{|x-y|}{1-r}\right)}\\
&=&\frac{\log\left(1+\frac{2r\theta}{1-r}\left(1+\frac{2|a|r\theta}{1-|a|r}\right)\left(1+\frac{|a|(1-r)}{1+|a|r}\right)\right)}{\log\left(1+\frac{2r\theta}{1-r}\right)},
\end{eqnarray*}
where $\theta=\frac{|x-y|}{2r}$.

By  Lemma \ref{le2.3},
it follows
\begin{eqnarray*}
J(x,y;a)&\le&
\frac{\log\left(1+\frac{2r\theta}{1-r}\left(1+\frac{|a|(1-r)}{1-|a|r}+\frac{2|a|r\theta}{1-|a|r}\right)\right)}{\log\left(1+\frac{2r\theta}{1-r}\right)}\\
&=&1+\frac{\log\left(1+\frac{2|a|r\theta}{1-|a|r}\right)}{\log\left(1+\frac{2r\theta}{1-r}\right)}\\
&\le&1+\frac{\log\left(1+\frac{2|a|r}{1-|a|r}\right)}{\log\left(1+\frac{2r}{1-r}\right)}\\
&=&1+\frac{{ \arth }(|a|r)}{{ \arth }
r}\\
&\le& 1+|a|.
\end{eqnarray*}
Therefore, we get
\begin{eqnarray*}
j_{{\mathbb B}^n}(f(x),f(y)) \leq (1+ |a|) j_{{\mathbb B}^n}(x,y).
\end{eqnarray*}

The sharpness of the upper bound $1+|a|$ was proved in \cite[Remark 3.4]{kvz} by taking $x=t a/|a|=-y,\,t\in(0, 1)$, and letting $t\rightarrow 0^+$.

For the left-hand side of the inequality, we
note that $f^{-1}(x)=A^{-1}\sigma^{-1}_a(x)=A^{-1}\sigma_{a}(x)$, here $\sigma_{a}(x)$ and $A$ are as above. Note that because $A$ is an orthogonal matrix, so is $A^{-1}$. Then by the above proof, for $x,\,y\in\Bn$,
we get
$$
\frac{j_{\mathbb B^n}(f^{-1}(x),f^{-1}(y))}{j_{\mathbb B^n}(x,y)}
=\frac{j_{\mathbb B^n}(\sigma_{a}(x),\sigma_{a}(y))}{j_{\mathbb B^n}(x,y)}\le 1+|a|.
$$
Therefore, we have
\begin{eqnarray}\label{mth1i}
j_{{\mathbb B}^n}(f(x),f(y))\geq \frac{1}{1+|a|}j_{{\mathbb B}^n}(x,y).
\end{eqnarray}
This completes the proof.
}
\hfill$\square$
\end{nonsec}
\medskip

\begin{conjecture}
Let $a\in\Bn$ and $f: \Bn\rightarrow\Bn=f\Bn$ be a M\"obius transformation with $f(0)=a$. Then
for  $x,\,y\in \Bn\setminus \{0\}$
\begin{eqnarray*}
j_{\Bn\setminus \{a\}}(f(x),f(y))\le C(a) j_{\Bn\setminus \{0\}}(x,y),
\end{eqnarray*}
where the constant $C(a)=1+(\log\frac{2+|a|}{2-|a|})/\log3$ is the best possible.
\end{conjecture}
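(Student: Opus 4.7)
The plan is to follow the structure of the proof of Theorem \ref{mth1}: reduce to a canonical form, carry out a case analysis on which term realizes the minimum in each $j$-metric denominator, and verify sharpness by a direct computation. By rotating source and target (both $j$-isometries on the respective punctured disks), we may assume $a\in(0,1)$ is real, and since any M\"obius map $g:\B\to\B$ with $g(0)=a$ factors as $g=f\circ R$ for some rotation $R$ of $\B$ and $f(z)=(z+a)/(1+az)$, we may take $f$ to be this canonical map. The basic M\"obius identities
\[|f(z)-a|=\frac{(1-a^2)|z|}{|1+az|},\quad |f(z)-f(w)|=\frac{(1-a^2)|z-w|}{|1+az|\,|1+aw|},\quad 1-|f(z)|^2=\frac{(1-a^2)(1-|z|^2)}{|1+az|^2},\]
together with the elementary estimate $1-a\le|1+az|\le 1+a$ for $z\in\B$, will be the main computational tools.

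Sharpness is immediate. Taking $x=1/2$, $y=-1/2$ makes all four source candidates $|x|,|y|,1-|x|,1-|y|$ equal to $1/2$, so $d_s=1/2$ and $j_{\B\setminus\{0\}}(x,y)=\log 3$. A direct computation then gives $|f(x)-f(y)|=4(1-a^2)/(4-a^2)$ and identifies $d_t=1-|f(1/2)|=(1-a)/(2+a)$ as the minimum of the four target candidates, so that
\[j_{\B\setminus\{a\}}(f(x),f(y))=\log\Bigl(1+\frac{4(1+a)}{2-a}\Bigr)=\log\frac{3(2+a)}{2-a},\]
and the ratio equals $1+\log\bigl((2+a)/(2-a)\bigr)/\log 3=C(a)$ exactly.

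For the upper bound, by symmetry assume $|x|\le|y|$, so $d_s\in\{|x|,1-|y|\}$, and split into subcases according to which of $|f(x)-a|,|f(y)-a|,1-|f(x)|,1-|f(y)|$ realizes $d_t$. When both $d_s$ and the active term of $d_t$ are of "puncture" type, the M\"obius formulas reduce the ratio to an expression involving only $|x|,|y|,|x-y|$ and the factor $|1+ay|^{-1}\le(1-a)^{-1}$, bounded by a routine calculation; when both are of "boundary" type, both $j$-metrics collapse to the ordinary $j_\B$ and Theorem \ref{mth1} applies, but the side condition $|x|\ge 1-|y|$ excludes the extremal configuration of Theorem \ref{mth1} and a refined estimate yields a ratio below $C(a)$ in this region.

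The main obstacle is the mixed case — for instance $d_s=|x|$ with $d_t=1-|f(y)|$ — which contains the extremal pair $\{\pm 1/2\}$. After substituting the M\"obius formulas the ratio becomes an explicit function of $|x|$, $|y|$, $a$, and the angle between $x$ and $y$. I would attempt to show, via successive applications of the monotone form of l'H\^opital's rule (Lemma \ref{lhr}) in the spirit of Lemmas \ref{1-3} and \ref{le2.3}, that this function is maximized precisely at the corner $|x|=|y|=1/2$ with $x=-y$. Dispatching this corner optimization, together with the symmetric variant $d_s=1-|y|$ with $d_t=|f(x)-a|$, constitutes the technical heart of the proof.
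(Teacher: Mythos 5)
First, note that the paper does not prove this statement: it is posed as an open \emph{conjecture} (the authors only prove the analogous unpunctured result, Theorem \ref{mth1}), so there is no proof of record to compare yours against. Your sharpness computation is correct and worthwhile: with $a\in(0,1)$, $f(z)=(z+a)/(1+az)$, $x=1/2$, $y=-1/2$, one indeed gets $j_{\B\setminus\{0\}}(x,y)=\log 3$, $d(f(x),\partial(\B\setminus\{a\}))=1-|f(1/2)|=(1-a)/(2+a)$ as the minimizing term, and the ratio exactly $C(a)$; this shows the constant cannot be lowered. The reduction to the canonical $f$ via pre- and post-rotations is also fine.

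The genuine gap is that the upper bound --- the entire content of the inequality --- is never established. In the decisive mixed case ($d_s=|x|$ against $d_t=1-|f(y)|$, which contains your extremal pair) you only state that you ``would attempt'' a monotone-l'H\^opital argument to show the ratio is maximized at $|x|=|y|=1/2$, $x=-y$; no such argument is given, and it is far from routine, since the ratio depends on four real parameters and the candidate extremum sits at an interior corner of the case decomposition rather than at a limit. The boundary--boundary case is also not closed: Theorem \ref{mth1} (applied to $f^{-1}$, which sends $a$ to $0$) gives only the constant $1+|a|$, and one checks that $C(a)=1+\log\bigl(\tfrac{2+|a|}{2-|a|}\bigr)/\log 3<1+|a|$ for all $a\in(0,1)$ (with equality only in the limits $a\to 0,1$), so the ``refined estimate'' you invoke there is precisely what would need to be proved. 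As it stands, the proposal is a plausible programme plus a verified extremal configuration, not a proof; the conjecture remains open.
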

\medskip

\subsection*{Acknowledgments}
The research of Matti Vuorinen was supported by the Academy of Finland,
Project 2600066611. The research of Gendi Wang was supported by CIMO
of Finland, Grant TM-10-7364.

\end{document}